\documentclass[12pt]{article}
\usepackage{amsmath}
\usepackage{amsthm}
\usepackage{amssymb}
\usepackage{booktabs}
\usepackage{mathtools}
\usepackage{lmodern}
\usepackage{microtype}
\usepackage[T1]{fontenc}
\usepackage[utf8]{inputenc}
\usepackage{hyphenat}
\usepackage{enumitem}
\usepackage{mathabx}
\usepackage{xcolor}
\usepackage{comment}
\usepackage{thm-restate}
\usepackage[pdftex]{graphicx}
\usepackage[pdftex,margin=1in,marginparwidth=0.75in]{geometry}
\PassOptionsToPackage{hyphens}{url}
\usepackage[bookmarks=true, bookmarksopen=true,%
    bookmarksdepth=3,bookmarksopenlevel=2,%
    colorlinks=true,%
    linkcolor=blue,%
    citecolor=blue,%
    filecolor=blue,%
    menucolor=blue,%
    urlcolor=blue]{hyperref}
\hypersetup{pdftitle={Title}}
\hypersetup{pdfauthor={authors}}
\usepackage{url}
\usepackage{tikz}
\usepackage{cite}
\usetikzlibrary{decorations.markings, arrows, decorations.fractals}

\graphicspath{{draws/}}

\counterwithin{figure}{section}

\newtheorem{theorem}{Theorem}[section]
\newtheorem{lemma}[theorem]{Lemma}
\newtheorem{proposition}[theorem]{Proposition}
\newtheorem{conjecture}[theorem]{Conjecture}
\theoremstyle{definition}
\newtheorem{definition}[theorem]{Definition}



\begin{document}

\title{A Full Study of the Dynamics on One-Holed Dilation Tori}
\author{Mason Haberle and Jane Wang}
\date{Summer 2020}

\date{\today}

\maketitle

\begin{abstract}
    An open question in the study of dilation surfaces is to determine the typical dynamical behavior of the directional flow on a fixed dilation surface. 
    We show that on any one-holed dilation torus, in all but a measure zero Cantor set of directions, the directional flow has an attracting periodic orbit, is minimal, or is completely periodic. 
    We further show that for directions in this Cantor set, the directional flow is attracted to either a saddle connection or a lamination on the surface that is locally the product of a measure zero Cantor set and an interval.
\end{abstract}


\section{Introduction}
\label{sec:intro}

\textit{Translation surfaces} are geometric surfaces that are locally modeled by the plane with structure group the group of translations.  
One way to think about translation surfaces is to view them as collection of polygons in the plane with pairs of parallel opposite sides identified by translation.  
An example of a translation surface is shown below in Figure \ref{fig:translation}.

\begin{figure}[ht]
\begin{center}
    \begin{tikzpicture}
        \coordinate (A) at (0,0);
        \coordinate (B) at (1,0);
        \coordinate (C) at (2,1);
        \coordinate (D) at (0,2);
        \coordinate (E) at (0,3);
        \coordinate (F) at (-1,2);
        \coordinate (G) at (-2,2);
        \coordinate (H) at (-2,1);

        \fill[fill = blue!5] (A) -- (B) -- (C) -- (D) -- (E) -- (F) -- (G) -- (H) -- (A);
        \draw[blue, very thick] 
            (A) -- (B) node[pos = 0.5, color = black, below]{$A$} 
            (F) -- (G) node[pos = 0.5, color = black, above]{$A$};
        \draw[red, very thick]  
            (B) -- (C) node[pos = 0.5, color = black, right]{$B$}
            (E) -- (F) node[pos = 0.5, color = black, left]{$B$};
        \draw[yellow, very thick] 
            (C) -- (D) node[pos = 0.5, color = black, above]{$C$} 
            (A) -- (H) node[pos = 0.5, color = black, below]{$C$};
        \draw[black!30!green, very thick]
            (G) -- (H) node[pos = 0.5, color = black, left]{$D$} 
            (E) -- (D) node[pos = 0.5, color = black, right]{$D$};
        
    \end{tikzpicture}
\caption{A genus $2$ translation surface.}
\label{fig:translation}
\end{center}
\end{figure}
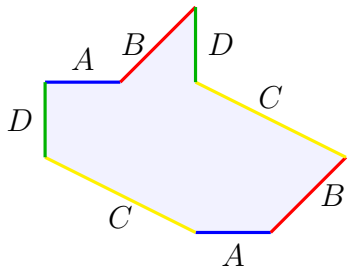

Translation surfaces are well-studied objects with connections to many other areas of mathematics including dynamical systems, algebraic geometry, and mathematical billiards. 
From the dynamical perspective, there has been much progress made on understanding the \emph{directional flow} or the \emph{directional foliations} on given and generic translation surfaces. 
For example, a celebrated theorem of Kerchoff, Masur, and Smillie \cite{kms_ergodicity} proves that on any translation surface, the directional flow in almost any direction is uniquely ergodic.

\textit{Dilation surfaces} are a natural generalization of translation surfaces. 
They are newer objects and while much less is known about them, many of the same questions that can be asked about translation surfaces can be also be asked about dilation surfaces. 
Like translation surfaces, we can think of dilation surfaces as collections of polygons in the plane, now with sides identified not just by translation but by translation and dilation.  
We call such a representation a \emph{polygonal model} for the surface.

Directional flows and foliations still exist on dilation surfaces, but one result of this generalization is that the spectrum of possible dynamical behaviors of the straight line flow on a dilation surface is richer and more difficult to understand than on a translation surface.  

For example, it is possible to find straight trajectories on dilation surfaces that converge to a periodic orbit, as seen in Figure \ref{fig:dilation}. 
This type of dynamical behavior cannot occur on translation surfaces. 

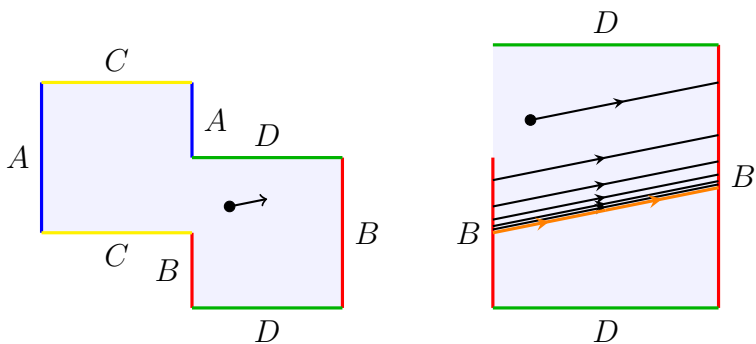
\begin{figure}[ht]
\begin{center}
    \begin{tikzpicture}[decoration = {
            markings,
            mark = at position 0.5 with {\arrow{stealth}},
        }]
    
        \coordinate (A) at (0, 1);
        \coordinate (B) at (2, 1);
        \coordinate (C) at (2, 0);
        \coordinate (D) at (4, 0);
        \coordinate (E) at (4, 2);
        \coordinate (F) at (2, 2);
        \coordinate (G) at (2, 3);
        \coordinate (H) at (0, 3);
        
        \fill[fill = blue!5] (A) -- (B) -- (C) -- (D) -- (E) -- (F) -- (G) -- (H) -- (A);
        \draw[blue, very thick] 
            (H) -- (A) node[pos = 0.5, color = black, left]{$A$} 
            (F) -- (G) node[pos = 0.5, color = black, right]{$A$};
        \draw[red, very thick]  
            (B) -- (C) node[pos = 0.5, color = black, left]{$B$}
            (D) -- (E) node[pos = 0.5, color = black, right]{$B$};
        \draw[yellow, very thick] 
            (A) -- (B) node[pos = 0.5, color = black, below]{$C$} 
            (G) -- (H) node[pos = 0.5, color = black, above]{$C$};
        \draw[black!30!green, very thick]
            (C) -- (D) node[pos = 0.5, color = black, below]{$D$} 
            (E) -- (F) node[pos = 0.5, color = black, above]{$D$};

        \coordinate (I) at (6, 0);
        \coordinate (J) at (6, 2);
        \coordinate (K) at (6, 3.5);
        \coordinate (L) at (9, 3.5);
        \coordinate (M) at (9, 0);
        
        \fill[fill = blue!5] (I) -- (K) -- (L) -- (M);
        
        \draw[red, very thick] (I) -- (J) node[pos = 0.5, left, color = black]{$B$}
        (L) -- (M) node[pos = 0.5, right, color = black]{$B$};
        \draw[black!30!green, very thick] (K) -- (L) node[pos = 0.5, above, color = black]{$D$}
        (M) -- (I) node[pos = 0.5, below, color = black]{$D$};

        \draw [thick, postaction = decorate] (6.5, 2.5) -- (9, 3);
        \draw [thick, postaction = decorate] (6, 1.7) -- (9, 2.3);
        \draw [thick, postaction = decorate] (6, 1.35) -- (9, 1.95);
        \draw [thick, postaction = decorate] (6, 1.175) -- (9, 1.775);
        \draw [thick, postaction = decorate] (6, 1.0875) -- (9, 1.6875);
        \draw [thick, postaction = decorate] (6, 1.04375) -- (9, 1.64375);
        \draw [very thick, orange, postaction = decorate] (6, 1) -- (7.5, 1.3);
        \draw [very thick, orange, postaction = decorate] (7.5, 1.3) -- (9, 1.6);
        
        \filldraw (6.5, 2.5) circle (2pt);
        \filldraw (2.5, 1.35) circle (2pt);
        
        \draw[->, thick] (2.5, 1.35) -- (3, 1.45);
        
    \end{tikzpicture}
\caption{Left: A genus $2$ dilation surface.  Right: A closeup of the surface contains a black trajectory which converges to the orange periodic orbit.}
\label{fig:dilation}
\end{center}
\end{figure}

One goal is then to understand the typical dynamical behavior of the straight line flow on a dilation surface, in the spirit of a statement like the theorem of Kerchoff-Masur-Smillie on translation surfaces. 
In \cite{dTori_teich_dynamics}, Ghazouani conjectures that on any dilation surface, the directional flow in almost every direction has an attracting and repelling periodic orbit. 

Some evidence to support this conjecture has been found by various authors by examining the dynamics on certain families of dilation surfaces.  Such examples have been found to match the conjectured behavior. 
In \cite{cascades}, Boulanger, Fougeron, and Ghazouani study a surface that they call the Disco Surface by leveraging a connection between dynamics on dilation surfaces and the dynamics of a family of maps on the interval called \textit{affine interval exchange maps} or AIETs for short (for more on AIETs, please see Subsection \ref{sec:aiets}). 
Ghazouani uses an understanding of the broader dynamics on the moduli space of dilation surfaces to investigate the dynamics on twice-punctured dilation tori in \cite{dTori_teich_dynamics}. 
In separate works by Boulanger-Ghazouani \cite{ohdTori} and Bowman-Sanderson \cite{staircase}, the authors use different techniques to investigate dynamics on one-holed dilation tori. 

Our work builds on the work in these last two papers and furthers the understanding of dynamics on one-holed dilation tori. 
Boulanger and Ghazouani claim in \cite{ohdTori} that on any one-holed dilation torus, there is a full-measure set of directions on which the directional flow accumulates on a set of closed curves and a measure-zero set of directions on which the accumulation set locally looks like a Cantor set cross an interval. 
They further claim that these results about the dynamics on general dilation tori follow verbatim from analysis of dynamics on the Disco Surface done in \cite{cascades}. 
However, we found that the methods used to analyze the Disco Surface did not directly apply to general one-holed dilation tori and that the study of one-holed dilation tori dynamics required more careful analysis. 
In our first theorem, we expand on the techniques of Boulanger and Ghazouani to carefully prove their claims about one-holed dilation tori dynamics.

\begin{restatable*}{theorem}{cantor}
\label{thm:1}
 Let $X$ be a dilation torus with one nonempty boundary component. Then there is a Cantor set of measure zero $C \subset \mathbb{RP}^1$, such that the straight line flow on $X$ in any direction in $\mathbb{RP}^1 \setminus C$ beginning at any noncritical point accumulates to a closed orbit in $X$. 
\end{restatable*}

We see from the above theorem that for almost every direction on any one-holed dilation torus, the directional foliation is simple to understand: it accumulates on a closed orbit or saddle connection. There is also a single direction in which the flow might be completely periodic or minimal. 

What is perhaps more interesting is the dynamics in the remaining Cantor set of directions $C \subseteq \mathbb{RP}^1$. 
We will see that the set $C$ of directions splits into three sets with different behaviors. 

A \emph{lamination} of a dilation surface is a nowhere-dense closed union of parallel trajectories. On a portion of the set $C$ of directions, the straight line flow will accumulate to a lamination whose cross-section is a Cantor set. This dynamical behavior of Cantor-accumulation was also claimed to exist by Boulanger and Ghazouani in \cite{ohdTori}. 
Our contribution is then to prove this carefully and to show using new techniques that the Cantor set cross-section of such a lamination is always measure zero.

\begin{restatable*}{theorem}{dynamics}
\label{thm:2}
The Cantor set $C$ of directions defined in Theorem \ref{thm:1} for a one-holed dilation torus $X$ splits into three sets $\{m_B\} \cup C_1 \cup C_2$ with the following dynamical behaviors: 
\begin{itemize}
    \itemsep0ex
    \item The direction $m_B$ is the direction parallel to the boundary of $X$. In this direction, the straight line flow can be minimal, can accumulate to a periodic trajectory, or can be completely periodic. 
    \item The set of directions $C_1$ is a countable set. In directions in $C_1$, the straight line flow on $X$ accumulates onto a saddle connection. 
    \item In directions in $C_2$, the straight line flow on $X$ accumulates to a lamination of $X$ whose cross-section is a measure zero Cantor set.
\end{itemize}
\end{restatable*}

The proof of the statement that these Cantor sets are measure zero relies on new techniques involving the measures of certain sets under finitely many iterates of an associated AIET. 
Through the analytic properties of how these measures change as we vary the parameters of the underlying AIETs, we can better understand the measures of these limiting Cantor sets.

\section{Background}
\label{sec:background}

In this section, we provide some background on dilation surfaces and affine interval exchange maps. 
Much of the motivation for studying dilation surfaces comes from the wealth of knowledge about their cousins, translation surfaces. 
There are many wonderful introductions to translation surfaces. 
We refer an interested reader to \cite{wright_intro} or \cite{flatsurfaces}.

\subsection{Dilation surfaces}
\label{sec:dilationBackground}

In line with \cite{cascades}, we define dilation surfaces in the following way:

\begin{definition}[Dilation Surface]
\label{def:dilationsurface}
A \textit{dilation surface} is a surface $X$ with a finite set of cone points $\Sigma \subset X$ and an atlas of charts on $X \setminus \Sigma$ whose transition maps are restrictions of maps $z \mapsto a z + b$ for $a \in \mathbb{R}_+$, $b \in \mathbb{C}$.  
We require that punctured neighborhoods of $X$ around points of $\Sigma$ be homeomorphic to some $k$-sheet covering of $\mathbb{C}$.
\end{definition}

We can always find a \emph{polygonal model} for the dilation surfaces of interest: 
a collection of polygons with sides identified in parallel opposite pairs by translations and/or dilations by real factors.  
For our purposes, we allow dilation surfaces to have boundary components which have no identifications.  
Some examples are found in Figures \ref{fig:doublechamber} and \ref{fig:dilationhexagon}.

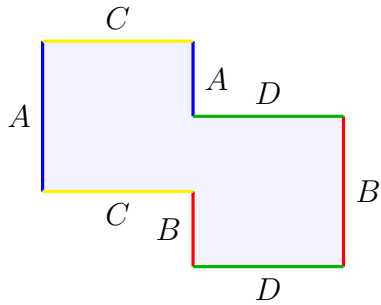
\begin{figure}[ht]
\begin{center}
    \begin{tikzpicture}
        \coordinate (A) at (0, 1);
        \coordinate (B) at (2, 1);
        \coordinate (C) at (2, 0);
        \coordinate (D) at (4, 0);
        \coordinate (E) at (4, 2);
        \coordinate (F) at (2, 2);
        \coordinate (G) at (2, 3);
        \coordinate (H) at (0, 3);
        
        \fill[fill = blue!5] (A) -- (B) -- (C) -- (D) -- (E) -- (F) -- (G) -- (H) -- (A);
        \draw[blue, very thick] 
            (H) -- (A) node[pos = 0.5, color = black, left]{$A$} 
            (F) -- (G) node[pos = 0.5, color = black, right]{$A$};
        \draw[red, very thick]  
            (B) -- (C) node[pos = 0.5, color = black, left]{$B$}
            (D) -- (E) node[pos = 0.5, color = black, right]{$B$};
        \draw[yellow, very thick] 
            (A) -- (B) node[pos = 0.5, color = black, below]{$C$} 
            (G) -- (H) node[pos = 0.5, color = black, above]{$C$};
        \draw[black!30!green, very thick]
            (C) -- (D) node[pos = 0.5, color = black, below]{$D$} 
            (E) -- (F) node[pos = 0.5, color = black, above]{$D$};
        
    \end{tikzpicture}
\caption{The double chamber, a genus 2 dilation surface with one cone point of angle $6\pi$.}
\label{fig:doublechamber}
\end{center}
\end{figure}

\begin{figure}[ht]
\begin{center}
\begin{tikzpicture}
    \coordinate (A) at (0, 0);
    \coordinate (B) at (2, 1);
    \coordinate (C) at (5, 0);
    \coordinate (D) at (5, 1.833);
    \coordinate (E) at (3, 2.5);
    \coordinate (F) at (0, 1);
    
    \fill[fill = blue!5] (A) -- (B) -- (C) -- (D) -- (E) -- (F) -- (A);
    
    \draw [very thick, color = blue] 
        (E) -- (F) node[pos = 0.5, above left, color = black]{$A$}
        (A) -- (B) node[pos = 0.5, below right, color = black]{$A$};
    \draw [very thick, color = red] 
        (B) -- (C) node[pos = 0.5, below left, color = black]{$B$}
        (D) -- (E) node[pos = 0.5, above right, color = black]{$B$};
    \draw [very thick, color = black] 
        (F) -- (A) node[pos = 0.5, left, color = black]{$C$}
        (C) -- (D) node[pos = 0.5, right, color = black]{$C$};
\end{tikzpicture}
\caption{A genus 1 dilation surface with two cone points of angle $2\pi$.}
\label{fig:dilationhexagon}
\end{center}
\end{figure}
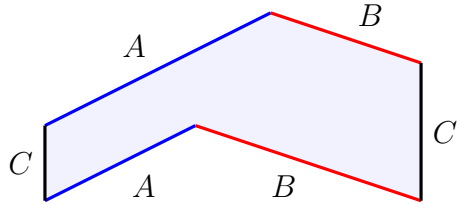

We can then consider the set of directions $\mathbb{RP}^1$ on the surface.  
For each direction $m \in \mathbb{RP}^1$, the foliation on $X$ by lines of slope $m$ is called the \emph{directional foliation} of $X$ in direction $m$.  
The leaves of the directional foliation are the orbits of points under the straight-line or geodesic flow in direction $m$.  
Leaves which intersect a cone point terminate at that point.  We call these leaves \emph{critical}.  
A leaf that intersects cone points on both ends is called a \emph{saddle connection}.

We are interested in the long-term dynamics of the geodesic flow on $X$.  
For a given directional foliation, we want to describe the $\omega$-limit (forward limit) and $\alpha$-limit (backward limit) sets of each noncritical leaf.  
For general dilation surfaces, this is a difficult task.  
In \cite{dTori_teich_dynamics}, it is conjectured that generically the foliations are \textit{Morse-Smale}, i.e. attracted to closed leaves:

\begin{definition}[Morse-Smale]
\label{def:Morse-Smale}
A foliation is \textit{Morse-Smale} if it has a finite collection of closed leaves such that the $\alpha$-limit set of any leaf of the foliation is one of these leaves, and the $\omega$-limit set of any leaf of the foliation is one of these leaves.
\end{definition}

\begin{conjecture}[S. Ghazouani]
\label{conj:5}
Let $X$ be a dilation surface which is not a translation surface.  
The collection of directions in $\mathbb{RP}^1$ with Morse-Smale directional foliations is full measure.
\end{conjecture}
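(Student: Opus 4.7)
The natural strategy is to reduce the general dilation surface $X$ to a collection of simpler pieces on which the dynamics are well understood, then use a renormalization argument analogous to the one behind Theorem \ref{thm:1}. For a fixed direction $m \in \mathbb{RP}^1$, I would choose a transversal $I$ to the directional foliation and study the induced first-return map $T_m \colon I \to I$, which is an AIET. To exhibit attracting and repelling closed leaves it suffices to show that, for almost every $m$, the orbit under $T_m$ of every noncritical point eventually lands in a subinterval on which $T_m$ is an affine contraction; the unique fixed point of such a contraction corresponds to an attracting closed leaf of the foliation on $X$, and running the argument backwards yields a repelling closed leaf.

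The next step is to perform a Rauzy--Veech style induction. On translation surfaces the induction preserves the IET structure, and on dilation surfaces the analogue introduces affine rescalings in subintervals. I would build a renormalization operator $\mathcal{R}$ on a finite-dimensional stratum of AIETs and show that, away from a measure-zero set of directions, iterates of $\mathcal{R}$ eventually produce an AIET with a genuine contraction somewhere. The one-holed torus case proved in Theorem \ref{thm:1} serves both as the base of the induction and as a blueprint: in that setting the Cantor set of non-Morse--Smale directions is measure zero, and the hope is that analogous obstruction sets remain null in higher complexity.

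A complementary route is a cut-and-paste approach: decompose $X$ along a family of generic closed curves into pieces that are one-holed dilation tori and cylinders, invoke Theorem \ref{thm:1} on each piece to get a measure-zero exceptional set of directions for that piece, and take the (countable) union to obtain a full-measure set of good directions on $X$. The subtle point is reconciling the local conclusions with global dynamics on $X$: an orbit that is attracted to a closed curve inside one piece may re-enter that piece after a detour through a neighbor and potentially destroy the Morse--Smale structure. One would therefore need to show that the global first-return map on a transversal inherits the contracting behavior of the piece-wise returns, which amounts to a uniform distortion estimate along long excursions.

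The main obstacle, and the reason the conjecture remains open, is establishing the required hyperbolicity of $\mathcal{R}$ in full generality. On translation surfaces this hyperbolicity is provided by Teichm\"uller geodesic flow together with the Masur--Veech measure; on the moduli space of dilation surfaces no invariant measure of comparable strength is known, and $\mathcal{R}$ can have a rich, poorly controlled non-wandering set. Concretely, ruling out a positive-measure set of directions in which the induced AIET is minimal (the affine analogue of a Keane-type obstruction) is the crux: without it, one cannot conclude that the contracting subinterval is eventually reached. A complete proof will likely require either a new invariant measure on the dilation moduli space or a direct combinatorial argument, generalizing the measure-estimates of Theorem \ref{thm:2}, that controls how often contracting subintervals are created by Rauzy--Veech induction.
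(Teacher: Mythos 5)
The statement you are addressing is Conjecture \ref{conj:5}, which the paper does not prove: it is quoted as an open conjecture of Ghazouani, and the paper's actual contributions (Theorems \ref{thm:1} and \ref{thm:2}) only settle the much more restricted case of one-holed dilation tori, where the first return maps are $2$-AIETs of a very special form. Your text is, by its own admission, a research program rather than a proof --- you write that the ``hope is'' that the obstruction sets remain null, that one ``would need to show'' a uniform distortion estimate, and that the conjecture ``remains open'' because hyperbolicity of the renormalization operator is not established. So there is no complete argument here to check against a proof in the paper, because neither exists; the honest verdict is that the statement remains a conjecture and your proposal does not close it.

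Two concrete gaps in the proposed reductions are worth naming. First, the cut-and-paste route does not work as described: a general dilation surface need not decompose into one-holed dilation tori and dilation cylinders, and even when such a decomposition exists, Theorem \ref{thm:1} is a statement about the flow on a surface \emph{with boundary}, where only the semicircle of directions pointing away from the boundary is controlled and trajectories never leave the piece. Once the pieces are glued, trajectories cross the decomposition curves, the first return map to a transversal is no longer one of the $(\rho_A,\rho_B)$-maps analyzed in Section \ref{sec:dynamics}, and the countable union of the pieces' exceptional measure-zero sets says nothing about directions in which the global dynamics is minimal or otherwise non--Morse--Smale; you acknowledge this re-entry problem but offer no mechanism to resolve it. Second, the renormalization route requires exactly the ingredient the paper's method supplies only for $2$-AIETs: the uniform lower bound $|H(w)|/|I(w)| \geq \delta$ of Proposition \ref{prop:12}, which drives the full-measure conclusion. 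The paper itself flags in its concluding remarks that extending such an inequality to non-surjective AIETs on more than two intervals is the open hurdle, so invoking an operator $\mathcal{R}$ with the needed contraction-creation property in general is assuming the conjecture's hard core rather than proving it.
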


Note the assumption that $X$ not be a translation surface.  
In the case that $X$ is a translation surface, i.e. all transition maps of the atlas take the form $z \mapsto z + b$, then the generic behavior is determined in \cite{kms_ergodicity}:

\begin{theorem}[S. Kerchoff, H. Masur, J. Smillie]
\label{thm:6}
Let $X$ be a translation surface (without boundary).  
There is a full measure set of directions in $\mathbb{RP}^1$ whose directional foliations have leaves which are uniquely ergodic.
\end{theorem}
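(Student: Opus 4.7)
The plan is to reduce unique ergodicity of directional foliations on $X$ to a recurrence statement for the Teichmüller geodesic flow on the moduli space of translation surfaces. The group $SL(2,\mathbb{R})$ acts on each stratum of moduli space, and the rotation $r_\theta$ sends the direction-$\theta$ foliation on $X$ to the vertical foliation on $r_\theta \cdot X$. Since unique ergodicity of a measured foliation is preserved under affine deformations of the ambient surface, it suffices to show that for almost every $\theta \in \mathbb{RP}^1$ the vertical foliation on $r_\theta \cdot X$ is uniquely ergodic.

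The central ingredient is \emph{Masur's criterion}: if the forward Teichmüller geodesic $g_t \cdot Y$, where $g_t = \mathrm{diag}(e^t, e^{-t})$, is recurrent in the moduli space (i.e.\ returns infinitely often to a fixed compact subset of its stratum), then the vertical foliation on $Y$ is uniquely ergodic. Granting this, the task reduces to proving that for almost every $\theta$, the orbit $\{g_t \cdot r_\theta \cdot X\}_{t \geq 0}$ is recurrent. I would deduce this from Poincaré recurrence applied to the Teichmüller flow, which preserves the finite Masur--Veech measure on the stratum of $X$: almost every orbit is recurrent, and a Fubini-type argument along the unstable horocyclic direction transfers this ``measure on moduli space'' statement to a ``measure on directions $\theta$'' statement for the specific base surface $X$.

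The main obstacle is Masur's criterion itself, which is the technical heart of the argument. One proves the contrapositive: if the vertical foliation on $Y$ admits two distinct ergodic invariant transverse measures, then one produces a sequence of homologically essential splitting cycles whose horizontal holonomies are bounded while their vertical holonomies tend to zero. Pushing these cycles forward under $g_t$ shrinks them exponentially, producing arbitrarily short saddle connections on $g_t \cdot Y$ and forcing the Teichmüller geodesic out of every compact subset of the stratum. A secondary obstacle is actually assembling the Masur--Veech measure and establishing enough ergodic theory of the Teichmüller flow to invoke Poincaré recurrence in the first place. The original proof of Kerckhoff--Masur--Smillie bypasses the smooth-dynamics framework entirely, instead encoding the vertical foliation as an interval exchange map and running Rauzy--Veech renormalization; they show combinatorially that almost every iteration sequence yields a uniquely ergodic IET, which is equivalent to the statement of the theorem.
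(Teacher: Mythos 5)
First, note that the paper does not prove this statement at all: Theorem \ref{thm:6} is quoted as background and attributed to \cite{kms_ergodicity}, so there is no internal proof to compare against, and your proposal has to be judged on its own merits.

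On those merits, the overall architecture (reduce to the vertical foliation of $r_\theta \cdot X$, invoke Masur's criterion, and prove recurrence of $\{g_t r_\theta \cdot X\}_{t \ge 0}$ for almost every $\theta$) is the standard modern route, but the step you use to get recurrence does not work. Poincar\'e recurrence for the Teichm\"uller flow with respect to the finite Masur--Veech measure gives recurrence of $g_t \cdot Y$ for almost every surface $Y$ in the stratum, and a Fubini argument along $SO(2)$-orbits (or horocycles) only upgrades this to: for \emph{almost every} base surface $X$, almost every direction $\theta$ gives a recurrent geodesic. The theorem, however, is a statement about \emph{every} translation surface $X$, and a fixed $X$ may well lie in the measure-zero exceptional set, so no Fubini-type argument from an almost-everywhere statement on moduli space can reach it. Closing this gap is precisely the hard content: one needs a nondivergence statement valid for every $X$ and almost every $\theta$, which is obtained either by the quantitative nondivergence estimates for horocycle/geodesic trajectories (in the style of Minsky--Weiss or Eskin--Masur) or by the original Kerckhoff--Masur--Smillie argument, which works directly with the flat geometry of the fixed quadratic differential and the circle of directions. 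Relatedly, your closing description of the original proof is not accurate: encoding the foliation as an interval exchange and running Rauzy--Veech renormalization to get unique ergodicity for almost every parameter is the Masur/Veech proof of the Keane conjecture, an almost-everywhere-in-parameter-space statement, and it does not by itself yield the fixed-surface, almost-every-direction theorem stated here.
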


In particular, such leaves are minimal and equidistributed.  
This is in stark contrast to the generic Morse-Smale behavior conjectured for dilation surfaces above.  
While a full proof of the conjectured generic dynamical behavior still seems far-off, it is possible to ascertain generic Morse-Smale behavior for simple classes of dilation surfaces.  
To do so, we pass to the machinery of affine interval exchange transformations.

\subsection{Affine Interval Exchange Transformations}
\label{sec:aiets}

To study dynamics of the straight line flow on a dilation surface, it sometimes helps to consider instead the dynamics of the first return map on a transversal to the flow. 
As we will see, this first return map will be an example of an \textit{affine interval exchange transformation}. 
In this way, we can pass from studying dynamics of a flow on a surface to studying the dynamics of a map on the interval. 

This technique of passing to a lower-dimensional dynamical system has also been used extensively in the study of translation surfaces. 
In the case of translation surfaces, the first return map to a transversal is an interval exchange transformation. 
A good introduction to this subject is \cite{yoccoz}. 

\begin{definition}[Affine Interval Exchange Map] 
\label{def:AIET}
An affine interval exchange map (AIET) is an injective map $T$ from an interval to itself constructed by breaking up the interval into finitely many pieces and then mapping these pieces back to non-overlapping subintervals of the interval by affine maps $x \mapsto ax+b$ for $a \geq 0$. 
\end{definition}

Because there is some ambiguity as to where the endpoints of each interval map are mapped, we generally ignore the endpoints when thinking about AIETs. 

It is informative to provide some examples of AIETs.  
We will use a graphical representation with the domain pictured on the top and the range on the bottom.  
$T$ carries each subinterval on top to the corresponding subinterval on bottom by an affine, orientation-preserving mapping.  
In Figure \ref{fig:AIETex1} we see an example of a surjective AIET where the original interval is split up into three pieces that are then permuted and scaled. 

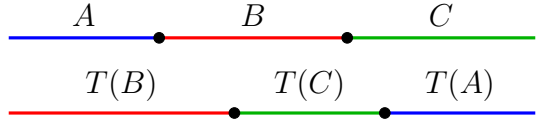
\begin{figure}[ht]
\begin{center}
\begin{tikzpicture}
  \coordinate (A) at (0, 1);
  \coordinate (B) at (2, 1);
  \coordinate (C) at (4.5, 1);
  \coordinate (D) at (7, 1);
  \coordinate (E) at (0, 0);
  \coordinate (F) at (3, 0);
  \coordinate (G) at (5, 0);
  \coordinate (H) at (7, 0);

  \draw [very thick, color = blue] (A) -- (B) node[pos = 0.5, above, color = black]{$A$};
  \draw [very thick, color = red] (B) -- (C) node[pos = 0.5, above, color = black]{$B$};
  \draw [very thick, color = black!30!green] (C) -- (D) node[pos = 0.5, above, color = black]{$C$};
  \draw [very thick, color = blue] (G) -- (H) node[pos = 0.5, above, color = black]{$T(A)$};
  \draw [very thick, color = red] (E) -- (F) node[pos = 0.5, above, color = black]{$T(B)$};
  \draw [very thick, color = black!30!green] (F) -- (G) node[pos = 0.5, above, color = black]{$T(C)$};
  
  \filldraw [color = black]
    (B) circle (2pt)
    (F) circle (2pt)
    (C) circle (2pt)
    (G) circle (2pt);
\end{tikzpicture}
\caption{A surjective AIET on 3 intervals.}
\label{fig:AIETex1}
\end{center}
\end{figure}

We can also consider AIETs that are not surjective. 
Figure \ref{fig:AIETex2} shows an AIET on three intervals where the image intervals do not cover the whole interval again. 

\begin{figure}[ht]
\begin{center}
\begin{tikzpicture}
  \coordinate (A) at (0, 1);
  \coordinate (B) at (2, 1);
  \coordinate (C) at (4.5, 1);
  \coordinate (D) at (7, 1);
  \coordinate (E) at (0, 0);
  \coordinate (F) at (3, 0);
  \coordinate (G) at (5, 0);
  \coordinate (H) at (7, 0);

  \draw [very thick, color = blue] (A) -- (B) node[pos = 0.5, above, color = black]{$A$};
  \draw [very thick, color = red] (B) -- (C) node[pos = 0.5, above, color = black]{$B$};
  \draw [very thick, color = black!30!green] (C) -- (D) node[pos = 0.5, above, color = black]{$C$};
  \draw [very thick, color = blue] (G) -- (H) node[pos = 0.5, above, color = black]{$T(A)$};
  \draw [very thick, color = red] (E) -- (1,0) node[pos = 0.5, above, color = black]{$T(B)$};
  \draw [very thick, color = black!30!green] (F) -- (G) node[pos = 0.5, above, color = black]{$T(C)$};
  
  \filldraw [color = black]
    (B) circle (2pt)
    (C) circle (2pt)
    (G) circle (2pt);
\end{tikzpicture}
\caption{A non-surjective AIET on 3 intervals.}
\label{fig:AIETex2}
\end{center}
\end{figure}
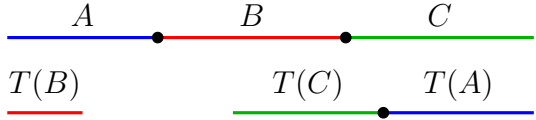

To understand dynamics on a dilation surface in a particular direction, it is often sufficient to understand the dynamics of the first return map on a transversal to the direction of flow. 
In the case when the first return map exists for every point on the transversal, the first return map will always be an AIET. 
This correspondence is best seen through an example.

Suppose that we wanted to understand the dynamics of the vertical flow on the dilation surface in Figure \ref{fig:frmexample}. 
We might instead consider the first return map on the horizontal transversal shown. 
That is, for every point $x$ on the transversal, we let $T(x)$ be the point where the vertical flow from $x$ on the surface first returns back to the transversal. 

\begin{figure}[ht]
\begin{center}
\begin{tikzpicture}
    \coordinate (A) at (0, 1);
    \coordinate (B) at (3, 0);
    \coordinate (C) at (7, 1);
    \coordinate (D) at (7, 2.5);
    \coordinate (E) at (4.5, 3.333);
    \coordinate (F) at (0, 2.208);
    
    \fill[fill = blue!5] (A) -- (B) -- (C) -- (D) -- (E) -- (F) -- (A);
    
    \draw [very thick, color = blue] 
        (E) -- (F) node[pos = 0.5, above left, color = black]{$A$}
        (B) -- (C) node[pos = 0.5, below right, color = black]{$A$};
    \draw [very thick, color = red] 
        (A) -- (B) node[pos = 0.5, below left, color = black]{$B$}
        (D) -- (E) node[pos = 0.5, above right, color = black]{$B$};
    \draw [very thick, color = black!30!green] 
        (F) -- (A) node[pos = 0.5, left, color = black]{$C$}
        (C) -- (D) node[pos = 0.5, right, color = black]{$C$};
    \draw [thick] (0, 1.5) -- (7, 1.5);
        
    \filldraw
        (2.25, 1.5) circle (2pt) node[below]{$x$}
        (5, 1.5) circle (2pt) node[above]{$T(x)$};
        
    \draw[very thick, ->] (2.25, 1.5) -- (2.25, 2.25);
    \draw[very thick] (2.25, 2.25) -- (2.25, 2.76);
    \draw[very thick, ->] (5, 0.5) -- (5, 1);
    \draw[very thick] (5, 1) -- (5, 1.5);
\end{tikzpicture}
\vskip 0.5cm
\begin{tikzpicture}
    \coordinate (A) at (0, 1);
    \coordinate (B) at (4.5, 1);
    \coordinate (C) at (7, 1);
    \coordinate (D) at (0, 0);
    \coordinate (E) at (3, 0);
    \coordinate (F) at (7, 0);
    
    \draw [very thick, color = blue] (A) -- (B) node[pos = 0.5, above, color = black]{$A$};
    \draw [very thick, color = red] (B) -- (C) node[pos = 0.5, above, color = black]{$B$};
    \draw [very thick, color = blue] (E) -- (F) node[pos = 0.5, above, color = black]{$T(A)$};
    \draw [very thick, color = red] (D) -- (E) node[pos = 0.5, above, color = black]{$T(B)$};
    
    \filldraw [color = black] (B) circle (2pt);
    \filldraw [color = black] (E) circle (2pt);
\end{tikzpicture}
\caption{A dilation surface with a vertical flow, and the first return map on a transversal.}
\label{fig:frmexample}
\end{center}
\end{figure}
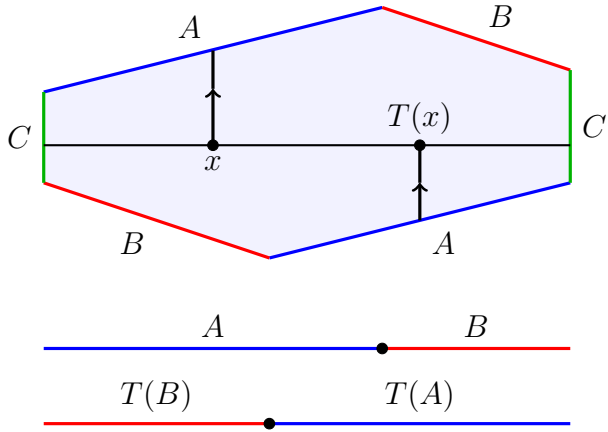

If we repeat this process for every point on the transversal, we find that our first return map $T$ is the AIET on two intervals in Figure \ref{fig:frmexample}.

The dynamics of this $2$-AIET directly correspond to the dynamics of the vertical flow on the dilation surface since the vertical flow is a suspension of the $2$-AIET.  
If the $2$-AIET is minimal, has an attracting periodic orbit or critical orbit (orbit that eventually hits the break point $x_T$), is completely periodic, or has some other behavior, then the vertical flow on the dilation surface must have the same behavior.
In this way, we have reduced our problem of understanding dynamics on a dilation surface to understanding the dynamics of a corresponding AIET.

\section{Flows on a one-holed dilation torus}
\label{sec:flowStructure}

We aim to study the behavior of straight line flows on a particular class of dilation surfaces known as \emph{one-holed dilation tori}.  
By a one-holed dilation torus, we mean a dilation surface with one boundary component, one cone point on the boundary of angle $3\pi$, and no cone points on the interior of the surface.  
The boundary should be a straight segment connecting the cone point to itself.  
Topologically, such a surface is a torus with an open disk removed.  
A typical polygonal model for a one-holed dilation torus is shown in Figure \ref{fig:dilationtorus}.

\begin{figure}[ht]
\begin{center}
\begin{tikzpicture}
  \coordinate (A) at (1, 0);
  \coordinate (B) at (0, 1.5);
  \coordinate (C) at (1, 3.5);
  \coordinate (D) at (5, 3.5);
  \coordinate (E) at (3.25, 0);

  \fill[fill = blue!5] (A) -- (B) -- (C) -- (D) -- (E) -- (A);
  
  \draw [very thick] (A) -- (B);
  \draw [very thick, color = blue] (B) -- (C) node[pos = 0.5, above left, color = black]{$A$};
  \draw [very thick, color = red] (C) -- (D) node[pos = 0.5, above, color = black]{$B$};
  \draw [very thick, color = blue] (D) -- (E) node[pos = 0.5, below right, color = black]{$A$};
  \draw [very thick, color = red] (E) -- (A) node[pos = 0.5, below, color = black]{$B$};
  
  \filldraw
  (A) circle (2pt)
  (B) circle (2pt)
  (C) circle (2pt)
  (D) circle (2pt)
  (E) circle (2pt);
\end{tikzpicture}
\caption{A one-holed dilation torus.  The black side is the boundary component.}
\label{fig:dilationtorus}
\end{center}
\end{figure}
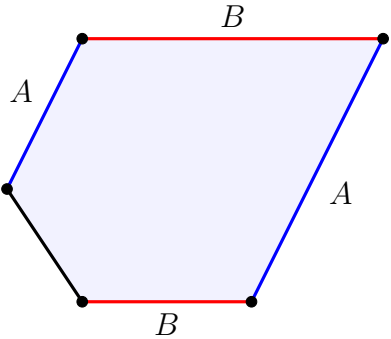

In fact, any one-holed dilation torus has a polygonal model given by a convex pentagon, after a triangulation and a cut-and-paste operation.
See Figure \ref{fig:concavetorus} for an example.

\begin{figure}[ht]
\begin{center}
\begin{tikzpicture}
  \coordinate (A) at (1, 0);
  \coordinate (B) at (-1, -1);
  \coordinate (C) at (1, 1.5);
  \coordinate (D) at (4.45, 1.5);
  \coordinate (E) at (3.25, 0);
  
  \fill [fill = blue!5] (A) -- (B) -- (C) -- (D) -- (E) -- (A);

  \draw [very thick] (A) -- (B);
  \draw [very thick, color = blue] (B) -- (C) node[pos = 0.5, above left, color = black]{$A$};
  \draw [very thick, color = red] (C) -- (D) node[pos = 0.5, above, color = black]{$B$};
  \draw [very thick, color = blue] (D) -- (E) node[pos = 0.5, below right, color = black]{$A$};
  \draw [very thick, color = red] (E) -- (A) node[pos = 0.5, below, color = black]{$B$};
  \draw [thick, dashed] (A) -- (C);
  
  \filldraw
  (A) circle (2pt)
  (B) circle (2pt)
  (C) circle (2pt)
  (D) circle (2pt)
  (E) circle (2pt);

  \coordinate (A) at (7, 0);
  \coordinate (B) at (10.45, 0.8);
  \coordinate (C) at (7, 1.5);
  \coordinate (D) at (10.45, 1.5);
  \coordinate (E) at (9.25, 0);
  
  \fill [fill = blue!5] (A) -- (C) -- (D) -- (B) -- (E) -- (A);

  \draw [very thick, color = blue] (A) -- (C) node[pos = 0.5, left, color = black]{$A$};
  \draw [very thick, color = red] (C) -- (D) node[pos = 0.5, above, color = black]{$B$};
  \draw [very thick, color = blue] (D) -- (B) node[pos = 0.5, right, color = black]{$A$};
  \draw [very thick] (B) -- (E);
  \draw [very thick, color = red] (E) -- (A) node[pos = 0.5, below, color = black]{$B$};
  \draw [thick, dashed] (D) -- (E);
  
  \filldraw
  (A) circle (2pt)
  (B) circle (2pt)
  (C) circle (2pt)
  (D) circle (2pt)
  (E) circle (2pt);
\end{tikzpicture}
\caption{The first polygonal model is concave, but after cutting the left triangle and pasting on the right we arrive at a convex polygonal model.}
\label{fig:concavetorus}
\end{center}
\end{figure}
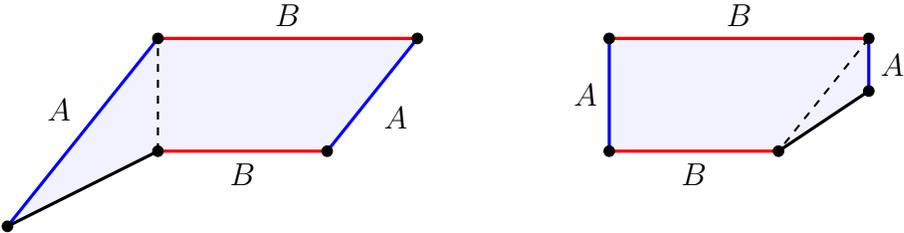

$X$ will denote a one-holed dilation torus.  To make sure that trajectories do not collide with the boundary, we only analyze the semicircle of directions pointing away from the boundary.  We call these the forward trajectories and parameterize them by their slope $m \in \mathbb{RP}^1$.  We can understand the dynamics of the forward trajectories by studying their first return maps on selected transverse diagonals of $X$. 

$\mathbb{RP}^1$ may be broken into six intervals, each containing directions whose first return maps obey one of four superficial behaviors.  The endpoints of these intervals are the directions of certain saddle connections on the surface.  In Figure \ref{fig:directionIntervals}, each interval is shown with endpoints colored according to their parallel saddle connection.  The first ``interval'' contains only a single direction, that which is parallel to the boundary.  We call this direction $m_B$.  The remaining intervals are labeled $I_1, \dots, I_5$ in clockwise order (behaviors at endpoints will be described by the analysis on either side).

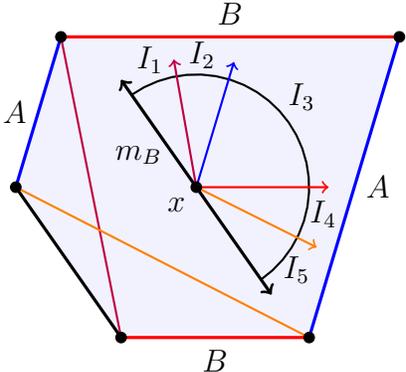
\begin{figure}[ht]
\begin{center}
\begin{tikzpicture}
    \pgftransformxslant{0.3}
    \pgftransformscale{2}
    \coordinate (A) at (1, 0);
    \coordinate (B) at (2.25, 0);
    \coordinate (C) at (2.25, 2);
    \coordinate (D) at (0, 2);
    \coordinate (E) at (0, 1);
    
    \fill[fill = blue!5] (A) -- (B) -- (C) -- (D) -- (E) -- (A);
    
    \draw [very thick, color = blue] 
        (B) -- (C) node[pos = 0.5, right, color = black]{$A$}
        (D) -- (E) node[pos = 0.5, left, color = black]{$A$};
    \draw [very thick, color = red] 
        (A) -- (B) node[pos = 0.5, below, color = black]{$B$}
        (C) -- (D) node[pos = 0.5, above, color = black]{$B$};
    \draw [very thick] (E) -- (A);
    \draw [thick, purple] (A) -- (D);
    \draw [thick, orange] (E) -- (B);
    
    \pgftransformxslant{-0.3}
    \coordinate (X) at (1.5, 1);
    \draw [thick,domain=-55:125] plot ({1.5 + 0.75*cos(\x)}, {1 + 0.75*sin(\x)});
    \draw [very thick, ->] (X) -- (1, 1.714) node[black, above right, pos = 0.95]{$I_1$} node[black, left, pos = 0.3]{$m_B$};
    \draw [very thick, ->] (X) -- (2, 0.286) node[black, above right, pos = 1]{$I_5$};
    \draw [purple, thick, ->] (X) -- (1.35, 1.85) node[black, above right, pos = 0.84]{$I_2$};
    \draw [blue, thick, ->] (X) -- (1.75, 1.833);
    \draw [red, thick, ->] (X) -- (2.38, 1);
    \draw [orange, thick, ->] (X) -- (2.3, 0.6) node[black, above right, pos = 0.85]{$I_4$};
    \filldraw (X) circle (1pt) node[below left]{$x$} ;
    \node[black] at (2.2, 1.6) {$I_3$};

    \filldraw (A) circle (1pt)
        (B) circle (1pt)
        (C) circle (1pt)
        (D) circle (1pt)
        (E) circle (1pt);
        
\end{tikzpicture}
\caption{The intervals of directions in a dilation torus.}
\label{fig:directionIntervals}
\end{center}
\end{figure}

\subsection{The boundary-parallel direction}
\label{sec:boundaryParallelDirection}

First, we determine the behaviors of the flow in the boundary-parallel direction $m_B$.
This flow is special in that it produces a bijective first return map on a transverse diagonal.  
We note that it may also be reversed without risk of hitting the boundary.  
The picture of the first return map $T$ of the flow on a diagonal is shown in Figure \ref{fig:traj1}.

\begin{figure}[ht]
\begin{center}
\begin{tikzpicture}[decoration = {
            markings,
            mark = at position 0.5 with {\arrow{triangle 45}},
        }]
    \pgftransformxslant{0.3}
    \pgftransformscale{2}
    \coordinate (A) at (1, 0);
    \coordinate (B) at (2.25, 0);
    \coordinate (C) at (2.25, 2);
    \coordinate (D) at (0, 2);
    \coordinate (E) at (0, 1);
    \coordinate (X) at (1.75, 1.2);
    \coordinate (T) at (1.183, 0.292);
    \coordinate (T2) at (1.846, 1.354);
    
    \fill[fill = blue!5] (A) -- (B) -- (C) -- (D) -- (E) -- (A);
    
    \draw [very thick, color = blue] 
        (B) -- (C) node[pos = 0.5, right, color = black]{$A$}
        (D) -- (E) node[pos = 0.5, left, color = black]{$A$};
    \draw [very thick, color = red] 
        (A) -- (B) node[pos = 0.5, below, color = black]{$B$}
        (C) -- (D) node[pos = 0.5, above, color = black]{$B$};
    \draw [very thick] (E) -- (A);
    \draw [thick] (A) -- (C);

    \draw[thick, postaction = decorate] (X) -- (0.95, 2);
    \draw[thick] (1.475, 0) -- (T);
    \draw[thick, postaction = decorate] (T) -- (0, 1.475);
    \draw[thick] (2.25, 0.95) -- (T2);

    \pgftransformxslant{-0.3}
    
    \filldraw (A) circle (1pt)
    (B) circle (1pt)
    (C) circle (1pt)
    (D) circle (1pt)
    (E) circle (1pt);
    
    \filldraw
        (X) circle (1pt)
        (T) circle (1pt)
        (T2) circle (1pt);
    
    \node[black] at (2.1, 1.05) {$x$};
    \node[black] at (1.6, 0.292) {$T(x)$};
    \node[black] at (2.2, 1.65) {$T^2(x)$};
        
    \pgftransformscale{0.5}
    \pgftransformshift{\pgfpoint{7cm}{1.5cm}}
    \coordinate (A) at (0, 1);
    \coordinate (B) at (2.5, 1);
    \coordinate (C) at (7, 1);
    \coordinate (D) at (0, 0);
    \coordinate (E) at (4, 0);
    \coordinate (F) at (7, 0);
    
    \draw [very thick, color = blue] (A) -- (B) node[pos = 0.5, above, color = black]{$A$};
    \draw [very thick, color = red] (B) -- (C) node[pos = 0.5, above, color = black]{$B$};
    \draw [very thick, color = blue] (E) -- (F) node[pos = 0.5, above, color = black]{$T(A)$};
    \draw [very thick, color = red] (D) -- (E) node[pos = 0.5, above, color = black]{$T(B)$};
    
    \filldraw [color = black] (B) circle (2pt);
    \filldraw [color = black] (E) circle (2pt);
\end{tikzpicture}
\caption{The flow and first return map in the direction parallel to the boundary.}
\label{fig:traj1}
\end{center}
\end{figure}
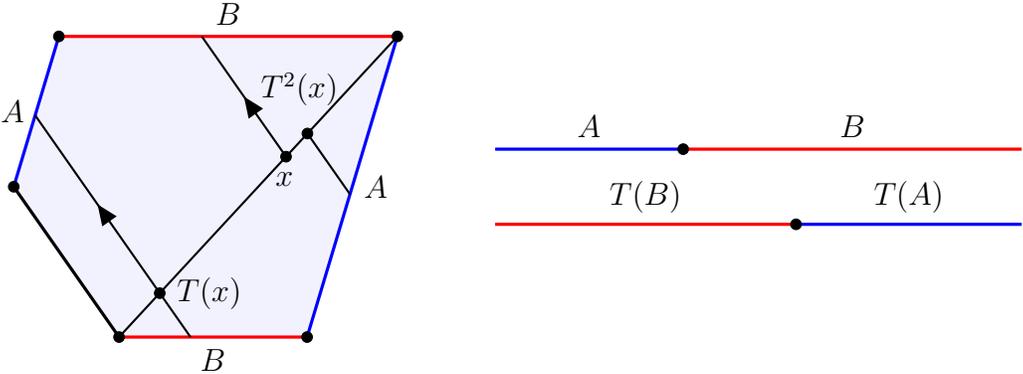

We find that the first return map is a bijective 2-AIET.  
In \cite{invariantmeasuresAIETs}, a formula is provided to compute the rotation number of such a map.
From the rotation number one can determine the dynamics.  
If the rotation number is irrational, then the dynamics are minimal.  
If the rotation number is rational, the dynamics may be either completely periodic or Morse-Smale.  
Once the dynamical behavior of $T$ is understood, the directional flow on $X$ is a suspension of this map and shares the same behavior.

\subsection{Directions in $I_1$ and $I_5$}
\label{sec:I1directions}

The first return maps on some transverse diagonal to directions in $I_1$ are given by non-surjective 2-AIETs of a specific form. These are maps which swap the two intervals, expand one and contract the other, and place them against the left and right endpoints of the codomain.  At first this desired form is not immediately apparent, but we may attain this form after applying the first return map once and restricting to the image.  See Figure $\ref{fig:traj2}$ for an example of this discussion.

\begin{figure}[ht]
\begin{center}
\begin{tikzpicture}[decoration = {
            markings,
            mark = at position 0.5 with {\arrow{triangle 45}},
        }]
    \pgftransformxslant{0.3}
    \pgftransformscale{2}
    \coordinate (A) at (1, 0);
    \coordinate (B) at (2, 0);
    \coordinate (C) at (2, 2);
    \coordinate (D) at (0, 2);
    \coordinate (E) at (0, 1);
    \coordinate (X) at (0.25, 1.125);
    \coordinate (T) at (1.7, 1.85);
    \coordinate (T2) at (1.05, 1.525);
    
    \fill[fill = blue!5] (A) -- (B) -- (C) -- (D) -- (E) -- (A);
    
    \draw [very thick, color = blue] 
        (B) -- (C) node[pos = 0.5, right, color = black]{$A$}
        (D) -- (E) node[pos = 0.5, left, color = black]{$A$};
    \draw [very thick, color = red] 
        (A) -- (B) node[pos = 0.5, below, color = black]{$B$}
        (C) -- (D) node[pos = 0.5, above, color = black]{$B$};
    \draw [very thick] (E) -- (A);
    \draw [thick] (E) -- (C);

    \draw[thick, postaction = decorate] (X) -- (0, 1.625);
    \draw[thick, postaction = decorate] (2, 1.25) -- (2-0.375, 2);
    \draw[thick, postaction = decorate] (1.8125, 0) -- (T2);
    
    \pgftransformxslant{-0.3}
    
    \filldraw (A) circle (1pt)
    (B) circle (1pt)
    (C) circle (1pt)
    (D) circle (1pt)
    (E) circle (1pt);
    
    \filldraw
        (X) circle (1pt)
        (T) circle (1pt)
        (T2) circle (1pt);
    
    \node[black] at (0.6, 0.95) {$x$};
    \node[black] at (2.2, 2.12) {$T(x)$};
    \node[black] at (1.4, 1.75) {$T^2(x)$};
        
    \pgftransformscale{0.5}
    \pgftransformshift{\pgfpoint{7cm}{3cm}}
    \coordinate (A) at (0, 1);
    \coordinate (B) at (2, 1);
    \coordinate (C) at (7, 1);
    \coordinate (D) at (0, 0);
    \coordinate (E) at (1, 0);
    \coordinate (F) at (4, 0);
    \coordinate (G) at (7, 0);
    
    \draw [very thick, color = blue] (A) -- (B) node[pos = 0.3, above, color = black]{$A$};
    \draw [very thick, color = red] (B) -- (C) node[pos = 0.5, above, color = black]{$B$};
    \draw [very thick, color = blue] (F) -- (G) node[pos = 0.5, above, color = black]{$T(A)$};
    \draw [very thick, color = red] (E) -- (F) node[pos = 0.5, above, color = black]{$T(B)$};
    \draw [very thick, dashed] (1, -0.5) -- (1, 1.5);
    
    \filldraw [color = black] (B) circle (2pt);
    \filldraw [color = black] (F) circle (2pt);

    \pgftransformshift{\pgfpoint{0cm}{-3cm}}
    \draw[very thick, ->] (3.5, 2.35) -- (3.5, 1.65);
    \coordinate (A) at (1, 1);
    \coordinate (B) at (2, 1);
    \coordinate (C) at (7, 1);
    \coordinate (D) at (1, 0);
    \coordinate (E) at (4, 0);
    \coordinate (F) at (5, 0);
    \coordinate (G) at (7, 0);
    
    \draw [very thick, color = blue] (A) -- (B) node[pos = 0.5, above, color = black]{$A$};
    \draw [very thick, color = red] (B) -- (C) node[pos = 0.5, above, color = black]{$B$};
    \draw [very thick, color = blue] (F) -- (G) node[pos = 0.5, above, color = black]{$T(A)$};
    \draw [very thick, color = red] (D) -- (E) node[pos = 0.5, above, color = black]{$T(B)$};
    
    \filldraw [color = black] (B) circle (2pt);
\end{tikzpicture}
\caption{An example of a flow and first return map in the first interval of directions.}
\label{fig:traj2}
\end{center}
\end{figure}
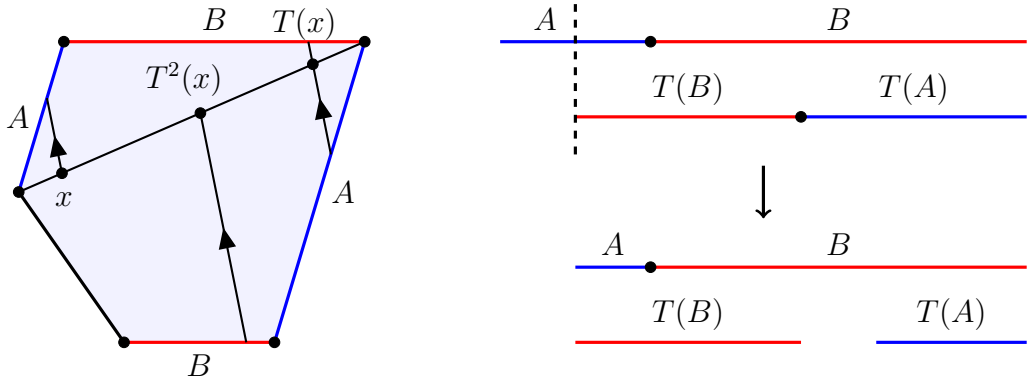

The dynamics of maps which look like this will be studied in Section $\ref{sec:pA>1}$.  We note here that maps in $I_5$ are of the same form as those in $I_1$, except for the choice of which interval expands and which contracts.  By reflecting, we will see that the dynamics of maps in $I_5$ will be the mirror image of the dynamics of maps in $I_1$.

\subsection{Directions in $I_2$ and $I_4$}
\label{sec:I2directions}

Trajectories moving in directions from $I_2$ are caught within an immediately present subsurface called a \emph{dilation cylinder}, an affine cylinder with holonomy across its core curve. 

There are two obvious dilation cylinders present in the polygonal model for $X$. 
The subsurface which consists of the convex hull of the two edges labeled $B$ in the polygonal model is the dilation cylinder corresponding to $I_2$.

This dilation cylinder is seen in the first return map. In Figure $\ref{fig:traj3}$, it is seen that flows in $I_2$ after one iteration only land on the red interval $B$.  The first return map is an injective affine contraction, so it has a unique fixed point.

This tells that the forward orbit from any noncritical point moving in a direction in $I_2$ is attracted to a closed orbit in the dilation cylinder.
Note also that at each boundary direction of $I_2$, the directional foliation is attracted to a saddle connection.

Directions in $I_4$ obey similar behavior, captured in the dilation cylinder which is the convex hull of the two edges labeled $A$.  We conclude that every directional foliation in the interiors of $I_2$ and $I_4$ converges to a closed orbit.

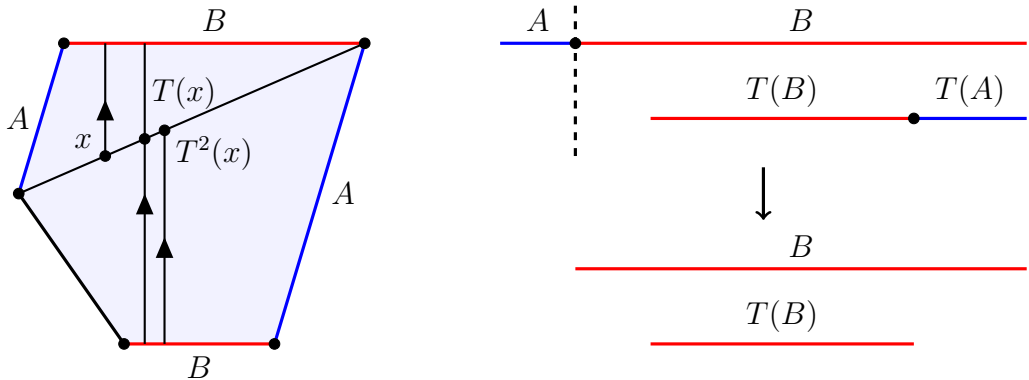
\begin{figure}[ht]
\begin{center}
\begin{tikzpicture}[decoration = {
            markings,
            mark = at position 0.5 with {\arrow{triangle 45}},
        }]
    \pgftransformxslant{0.3}
    \pgftransformscale{2}
    \coordinate (A) at (1, 0);
    \coordinate (B) at (2, 0);
    \coordinate (C) at (2, 2);
    \coordinate (D) at (0, 2);
    \coordinate (E) at (0, 1);
    \coordinate (X) at (0.5, 1.25);
    \coordinate (T) at (0.728, 1.364);
    \coordinate (T2) at (0.843, 1.421);
    
    \fill[fill = blue!5] (A) -- (B) -- (C) -- (D) -- (E) -- (A);
    
    \draw [very thick, color = blue] 
        (B) -- (C) node[pos = 0.5, right, color = black]{$A$}
        (D) -- (E) node[pos = 0.5, left, color = black]{$A$};
    \draw [very thick, color = red] 
        (A) -- (B) node[pos = 0.5, below, color = black]{$B$}
        (C) -- (D) node[pos = 0.5, above, color = black]{$B$};
    \draw [very thick] (E) -- (A);
    \draw [thick] (E) -- (C);
    
    \pgftransformxslant{-0.3}
    
    \filldraw (A) circle (1pt)
    (B) circle (1pt)
    (C) circle (1pt)
    (D) circle (1pt)
    (E) circle (1pt);
    
    \filldraw
        (X) circle (1pt)
        (T) circle (1pt)
        (T2) circle (1pt);
    
    \node[black] at (0.73, 1.35) {$x$};
    \node[black] at (1.4, 1.67) {$T(x)$};
    \node[black] at (1.6, 1.27) {$T^2(x)$};

    \draw[thick, postaction = decorate] (X) -- (0.875, 2);
    \draw[thick, postaction = decorate] (1.1375, 0) -- (1.1375, 2);
    \draw[thick, postaction = decorate] (1.269, 0) -- (T2);

    \pgftransformscale{0.5}
    \pgftransformshift{\pgfpoint{7cm}{3cm}}
    \coordinate (A) at (0, 1);
    \coordinate (B) at (1, 1);
    \coordinate (C) at (7, 1);
    \coordinate (D) at (0, 0);
    \coordinate (E) at (2, 0);
    \coordinate (F) at (5.5, 0);
    \coordinate (G) at (7, 0);
    
    \draw [very thick, color = blue] (A) -- (B) node[pos = 0.5, above, color = black]{$A$};
    \draw [very thick, color = red] (B) -- (C) node[pos = 0.5, above, color = black]{$B$};
    \draw [very thick, color = blue] (F) -- (G) node[pos = 0.5, above, color = black]{$T(A)$};
    \draw [very thick, color = red] (E) -- (F) node[pos = 0.5, above, color = black]{$T(B)$};
    \draw [very thick, dashed] (1, -0.5) -- (1, 1.5);
    
    \filldraw [color = black] (B) circle (2pt);
    \filldraw [color = black] (F) circle (2pt);

    \pgftransformshift{\pgfpoint{0cm}{-3cm}}
    \draw[very thick, ->] (3.5, 2.35) -- (3.5, 1.65);
    \coordinate (B) at (1, 1);
    \coordinate (C) at (7, 1);
    \coordinate (D) at (2, 0);
    \coordinate (E) at (5.5, 0);
    
    \draw [very thick, color = red] (B) -- (C) node[pos = 0.5, above, color = black]{$B$};
    \draw [very thick, color = red] (D) -- (E) node[pos = 0.5, above, color = black]{$T(B)$};
    
\end{tikzpicture}
\caption{An example of a flow and first return map in the second interval of directions. The lower map is a restriction of the upper map to part of the diagonal.}
\label{fig:traj3}
\end{center}
\end{figure}

\subsection{Directions in $I_3$}
\label{sec:I3directions}

The directions in $I_3$ have a similar structure to those in $I_1$ and $I_5$, in the sense that their first return maps are injective 2-AIETs.  They swap the two intervals, contracting both, and place them against the left and right endpoints of the codomain.  These maps will immediately have the desired structure without needing to restrict to the image.  See Figure \ref{fig:traj4}.

\begin{figure}[ht]
\begin{center}
\begin{tikzpicture}[decoration = {
            markings,
            mark = at position 0.8 with {\arrow{triangle 45}},
        }]
    \pgftransformxslant{0.3}
    \pgftransformscale{2}
    \coordinate (A) at (1, 0);
    \coordinate (B) at (2, 0);
    \coordinate (C) at (2, 2);
    \coordinate (D) at (0, 2);
    \coordinate (E) at (0, 1);
    \coordinate (X) at (0.5, 1.5);
    \coordinate (T) at (1.75, 0.25);
    \coordinate (T2) at (0.375, 1.625);
    
    \fill[fill = blue!5] (A) -- (B) -- (C) -- (D) -- (E) -- (A);
    
    \draw [very thick, color = blue] 
        (B) -- (C) node[pos = 0.5, right, color = black]{$A$}
        (D) -- (E) node[pos = 0.5, left, color = black]{$A$};
    \draw [very thick, color = red] 
        (A) -- (B) node[pos = 0.5, below, color = black]{$B$}
        (C) -- (D) node[pos = 0.5, above, color = black]{$B$};
    \draw [very thick] (E) -- (A);
    \draw [thick] (B) -- (D);

    \draw[thick, postaction = decorate] (X) -- (1, 2);
    \draw[thick, postaction = decorate] (1.5, 0) -- (T);
    \draw[thick] (T) -- (2, 0.5);
    \draw[thick, postaction = decorate] (0, 1.25) -- (T2);

    \pgftransformxslant{-0.3}
    
    \filldraw (A) circle (1pt)
    (B) circle (1pt)
    (C) circle (1pt)
    (D) circle (1pt)
    (E) circle (1pt);
    
    \filldraw
        (X) circle (1pt)
        (T) circle (1pt)
        (T2) circle (1pt);
    
    \node[black] at (0.9, 1.35) {$x$};
    \node[black] at (1.52, 0.29) {$T(x)$};
    \node[black] at (1.02, 1.85) {$T^2(x)$};

    \pgftransformscale{0.5}
    \pgftransformshift{\pgfpoint{7cm}{1.5cm}}
    \coordinate (A) at (0, 1);
    \coordinate (B) at (4, 1);
    \coordinate (C) at (7, 1);
    \coordinate (D) at (0, 0);
    \coordinate (E) at (2, 0);
    \coordinate (F) at (5.5, 0);
    \coordinate (G) at (7, 0);
    
    \draw [very thick, color = blue] (B) -- (C) node[pos = 0.5, above, color = black]{$A$};
    \draw [very thick, color = red] (A) -- (B) node[pos = 0.5, above, color = black]{$B$};
    \draw [very thick, color = blue] (D) -- (E) node[pos = 0.5, above, color = black]{$T(A)$};
    \draw [very thick, color = red] (G) -- (F) node[pos = 0.5, above, color = black]{$T(B)$};
    
    \filldraw [color = black] (B) circle (2pt);
    
\end{tikzpicture}
\caption{An example of a flow and first return map in the third interval of directions.}
\label{fig:traj4}
\end{center}
\end{figure}
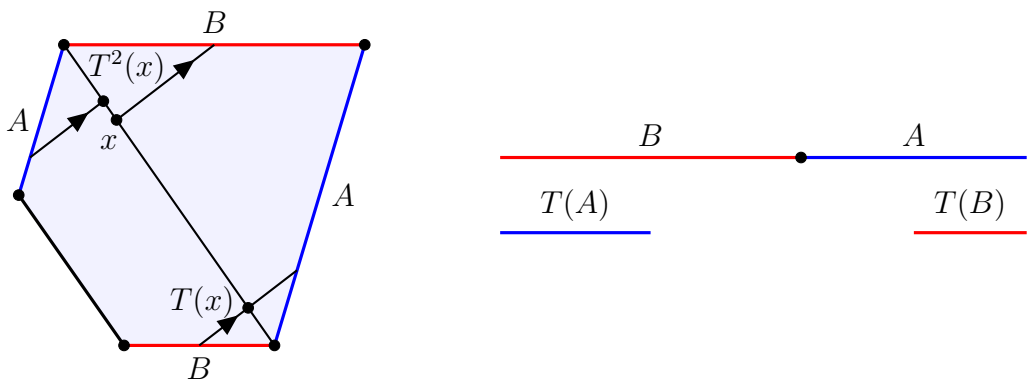

We call maps like the first return maps in the intervals $I_1$, $I_3$, and $I_5$ \emph{$(\rho_A, \rho_B)$-maps}.  These maps will be defined (and their name justified) in the next section.  While they are similar in structure to their surjective cousins, found in the boundary-parallel direction, their non-surjectivity leads to wildly different dynamics.  To understand the dynamics of flows on dilation tori, it is essential that we analyze the dynamics of these maps in detail.

\section{Dynamics of $(\rho_A, \rho_B)$-maps}
\label{sec:dynamics}

The main goal of this section is to prove the following theorem. 

\cantor

We will do this by analyzing a family of maps called $(\rho_A, \rho_B)$-maps. We consider two dilation factors $\rho_A, \rho_B > 0$.  We use an interval $D = [0, r]$ as our domain.
By a $(\rho_A, \rho_B)$-map, we mean an injective $2$-AIET $T: D \to D$ with the following properties:
\begin{itemize}
    \itemsep0em
    \item 
    $D$ is broken into two intervals $A$ and $B$, divided by the point $x_T \in D$.
    \item 
    On $A$, the derivative of $T$ is $\rho_A$.
    \item 
    On $B$, the derivative of $T$ is $\rho_B$.
    \item 
    $T$ moves $A$ to the right end of $D$, so the right endpoint of $T(A)$ is $r$.
    \item 
    $T$ sends $B$ to the left end of $D$, so the left endpoint of $T(B)$ is $0$.
\end{itemize}
The final first return maps in Figures \ref{fig:traj2} and \ref{fig:traj4} are examples of $(\rho_A,\rho_B)$-maps.

For fixed $\rho_A, \rho_B$, let $\mathcal{I}(\rho_A, \rho_B)$ be the space of $(\rho_A, \rho_B)$-maps.  We identify maps up to scaling, and typically represent these equivalence classes of maps by scaling the domain to be $D = [0, 1]$.
The space $\mathcal{I}(\rho_A, \rho_B)$ can then be parameterized by the point of discontinuity $x_T \in D$.
When $\rho_A < 1$, $\rho_B < 1$ so that both intervals contract, the discontinuities $x_T$ may run over the whole interval $D$ without violating injectivity.
If one of the intervals is expanding, a subinterval of $D$ contains the discontinuity points which parameterize injective maps in $\mathcal{I}(\rho_A, \rho_B)$ (this case is studied in Section \ref{sec:pA>1}).  
Any $(\rho_A, \rho_B)$-map $T$ can be represented by a diagram like the one in Figure \ref{fig:pApBex}, where $T$ maps the top intervals to the bottom ones:

\begin{figure}[ht]
\begin{center}
\begin{tikzpicture}
  \coordinate (A) at (0, 1.2);
  \coordinate (B) at (4.5, 1.2);
  \coordinate (C) at (7, 1.2);
  \coordinate (D) at (0, 0);
  \coordinate (E) at (2, 0);
  \coordinate (F) at (4.1, 0);
  \coordinate (G) at (7, 0);

  \draw [very thick, color = blue] (A) -- (B) 
    node[pos = 0.5, above, color = black]{$A$} 
    node[pos = 0.5, below, color = black]{$x_T$};
  \draw [very thick, color = red] (B) -- (C) 
    node[pos = 0.5, above, color = black]{$B$} 
    node[pos = 0.5, below, color = black]{$1 - x_T$};
  \draw [very thick, color = blue] (F) -- (G) 
    node[pos = 0.5, above, color = black]{$T(A)$} 
    node[pos = 0.5, below, color = black]{$\rho_A x_T$};
  \draw [very thick, color = red] (D) -- (E) 
    node[pos = 0.5, above, color = black]{$T(B)$} 
    node[pos = 0.5, below, color = black]{$\rho_B(1 - x_T)$};
  
  \filldraw [color = black] (B) circle (2pt) node[above]{$x_T$};
\end{tikzpicture}
\caption{A $(\rho_A, \rho_B)$-map with discontinuity point $x_T$.}
\label{fig:pApBex}
\end{center}
\end{figure}
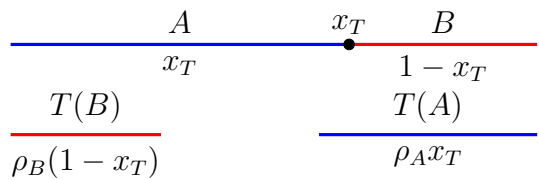

In order to understand the long-term dynamics of $(\rho_A, \rho_B)$-maps, we introduce an algorithm called \emph{Rauzy-Veech induction} which exposes the dynamics of these maps.  This algorithm is the key ingredient in our proof of Theorem \ref{thm:1}.

\subsection{Rauzy-Veech induction on $(\rho_A, \rho_B)$-maps}
\label{sec:pApBInduction}

Our main tool in analyzing the dynamics of $(\rho_A, \rho_B)$-maps will be a form of Rauzy-Veech induction. Rauzy-Veech induction (often shortened as Rauzy induction) was originally defined for interval exchange transformations (IETs) and is a way of accelerating the dynamics of these maps. This way, we can keep track of the trajectory of a point in the original IET if we understand its trajectory in the accelerated map. For an introduction to Rauzy induction on IETs, see \cite{yoccoz}. 

The Rauzy induction algorithm has also been used by various authors (e.g. \cite{mmy}) to study surjective AIETs, and in \cite{ohdTori}, Boulanger and Ghazouani define a version of Rauzy induction for nonsurjective 2-AIETs that we will briefly summarize here.

Let $T: D \to D$ be a $(\rho_A, \rho_B)$-map. We will then consider the first return map of the AIET to a carefully chosen subinterval via a \emph{Rauzy induction step}. One step of Rauzy induction produces another $(\rho_A^\prime, \rho_B^\prime)$-map where $\rho_A^\prime$ and $\rho_B^\prime$ are products of the original $\rho_A$ and $\rho_B$. Multiple steps of Rauzy induction starting with a map in $\mathcal{I}(\rho_A, \rho_B)$ then give us a collection of maps in the spaces $\mathcal{I}(\rho_A^{m_A}\rho_B^{m_B}, \rho_A^{n_A}\rho_B^{n_B})$ for integers $m_A, m_B, n_A, n_B \geq 0$.  This leads us to the following inductive definition of Rauzy induction:

Suppose $T \in \mathcal{I}(\rho_A^{m_A}\rho_B^{m_B}, \rho_A^{n_A}\rho_B^{n_B})$, let $A$ and $B$ be the left and right top intervals of $T$ respectively, 
and let $\lambda_A, \lambda_B$ be their lengths.  
There are three possibilities:

\begin{enumerate}
    \item[(1)]
    \begin{enumerate}
    
        \item[(R)] (Right Rauzy Induction) 
        $B \subsetneq T(A)$, i.e. $\lambda_B<\rho_A^{m_A}\rho_B^{m_B}\lambda_A$
        \\
        Letting $D' = D \setminus B$, we consider the first return map $T'$ on $D'$. 
        Then:\\
        $T' \in \mathcal{I}(\rho_A^{m_A}\rho_B^{m_B}, \rho_A^{m_A + n_A}\rho_B^{m_B + n_B})$ (recall that we may rescale the domain)\\
        and the lengths become 
        $\lambda'_A := \lambda_A - \rho_A^{-m_A}\rho_B^{-m_B} \lambda_B$ 
        and $\lambda'_B := \rho_A^{-m_A}\rho_B^{-m_B}\lambda_B$.\\
        This corresponds to the transformation $\lambda' = R_{m_A, m_B} \lambda$ where 
        \[
            \lambda := \begin{pmatrix} \lambda_A \\ \lambda_B \end{pmatrix},
            \qquad \lambda' := \begin{pmatrix} \lambda'_A \\ \lambda'_B \end{pmatrix}, \qquad R_{m_A, m_B} := \begin{pmatrix} 1 & -\rho_A^{-m_A}\rho_B^{-m_B} \\ 0 & \rho_A^{-m_A}\rho_B^{-m_B} \end{pmatrix}
        \]
        
        An example of right Rauzy induction is shown in Figure \ref{fig:rightInduction}.
        
        \begin{figure}[ht]
        \begin{center}
        \begin{tikzpicture}
          \draw [very thick, color = blue] (0, 1) -- (3.5, 1) node[pos = 0.5, above, color = black]{$A$};
          \draw [very thick, color = red] (3.5, 1) -- (5, 1) node[pos = 0.5, above, color = black]{$B$};
          \draw [very thick, color = blue] (2.5, 0) -- (5, 0) node[pos = 0.5, above, color = black]{$A$};
          \draw [very thick, color = red] (0, 0) -- (1, 0) node[pos = 0.5, above, color = black]{$B$};
          \draw [dashed] (3.5, -0.25) -- (3.5, 1.5);
          
          \draw[->, very thick] (5.75, 0.5) -- (6.25, 0.5);
          
          \draw [very thick, color = blue] (7, 1) -- (8.4, 1) node[pos = 0.5, above, color = black]{$A$};
          \draw [very thick, color = red] (8.4, 1) -- (10.5, 1) node[pos = 0.5, above, color = black]{$B$};
          \draw [very thick, color = blue] (9.5, 0) -- (10.5, 0) node[pos = 0.5, above, color = black]{$A$};
          \draw [very thick, color = red] (7, 0) -- (8, 0) node[pos = 0.5, above, color = black]{$B$};
          
          \draw[fill] (3.5, 1) circle (2pt);
          \draw[fill] (8.4,1) circle (2pt)
            (8.4,1) circle (2pt);
        \end{tikzpicture}
        \caption{A right Rauzy induction step.  $T$ is on the left, and the result $T'$ on the right.}
        \label{fig:rightInduction}
        \end{center}
        \end{figure}

        \item[(b)] (Left Rauzy Induction) 
        $A \subsetneq T(B)$, i.e. $\lambda_A < \rho_A^{n_A}\rho_B^{n_B} \lambda_B$
        
        Letting $D' = D \setminus A$, we consider the first return map $T'$ on $D'$.  Then: \\
        $T' \in \mathcal{I}(\rho_A^{m_A + n_A}\rho_B^{m_B+n_B}, 
        \rho_A^{n_A}\rho_B^{n_B})$ and the new lengths are $\lambda' = L_{n_A, n_B} \lambda$ where
        \[
            \lambda := \begin{pmatrix} \lambda_A \\ \lambda_B \end{pmatrix}, \qquad \lambda' := \begin{pmatrix} \lambda'_A \\ \lambda'_B \end{pmatrix}, \qquad L_{n_A, n_B} := \begin{pmatrix} \rho_A^{-n_A}\rho_B^{-n_B} & 0 \\ -\rho_A^{-n_A}\rho_B^{-n_B} & 1 \end{pmatrix}
        \]
        
        An example of a left Rauzy induction step is shown in Figure \ref{fig:leftInduction}.
        
        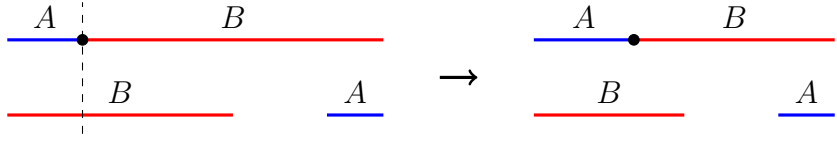
\begin{figure}[ht]
        \begin{center}
        \begin{tikzpicture}
          \draw [very thick, color = blue] (0, 1) -- (1, 1) node[pos = 0.5, above, color = black]{$A$};
          \draw [very thick, color = red] (1, 1) -- (5, 1) node[pos = 0.5, above, color = black]{$B$};
          \draw [very thick, color = blue] (4.25, 0) -- (5, 0) node[pos = 0.5, above, color = black]{$A$};
          \draw [very thick, color = red] (0, 0) -- (3, 0) node[pos = 0.5, above, color = black]{$B$};
          \draw [dashed] (1, -0.25) -- (1, 1.5);
          
          \draw[->, very thick] (5.75, 0.5) -- (6.25, 0.5);
          
          \draw [very thick, color = blue] (7, 1) -- (8.33, 1) node[pos = 0.5, above, color = black]{$A$};
          \draw [very thick, color = red] (8.33, 1) -- (11, 1) node[pos = 0.5, above, color = black]{$B$};
          \draw [very thick, color = blue] (10.25, 0) -- (11, 0) node[pos = 0.5, above, color = black]{$A$};
          \draw [very thick, color = red] (7, 0) -- (9, 0) node[pos = 0.5, above, color = black]{$B$};
          
          \draw[fill] (1, 1) circle (2pt);
          \draw[fill] (8.33,1) circle (2pt);
        \end{tikzpicture}
        \caption{A left Rauzy induction step.  $T$ is on the left, and the result $T'$ is on the right.}
        \label{fig:leftInduction}
        \end{center}
        \end{figure}
        
    \end{enumerate}

    \item[(2)] (Termination)  
    $T(A) \subseteq B$ and $T(B) \subseteq A$, i.e. 
    $\lambda_A \geq \rho_A^{n_A}\rho_B^{n_B} \lambda_B$ and $\lambda_B \geq \rho_A^{m_A}\rho_B^{m_B} \lambda_A$
    
    Letting $D' = D \setminus T(A)$, we consider the first return map $T'$ on $D'$.  
    The resulting map has image in $A$, and is a contraction in $A$, 
    so orbits under the map converge to an attractive fixed point in $A$ of derivative $\rho_A^{m_A + n_A}\rho_B^{m_B + n_B}$.  
    The picture is in Figure \ref{fig:terminateInduction}.
    
    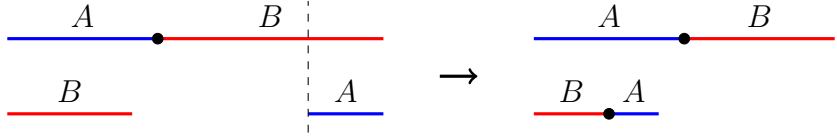
\begin{figure}[ht]
    \begin{center}
        \begin{tikzpicture}
          \draw [very thick, color = blue] (0, 1) -- (2, 1) node[pos = 0.5, above, color = black]{$A$};
          \draw [very thick, color = red] (2, 1) -- (5, 1) node[pos = 0.5, above, color = black]{$B$};
          \draw [very thick, color = blue] (4, 0) -- (5, 0) node[pos = 0.5, above, color = black]{$A$};
          \draw [very thick, color = red] (0, 0) -- (1.66, 0) node[pos = 0.5, above, color = black]{$B$};
          \draw [dashed] (4, -0.25) -- (4, 1.5);
          
          \draw[fill] (2,1) circle (2pt);
          
          \draw[->, very thick] (5.75, 0.5) -- (6.25, 0.5);
          
          \draw [very thick, color = blue] (7, 1) -- (9, 1) node[pos = 0.5, above, color = black]{$A$};
          \draw [very thick, color = red] (9, 1) -- (11, 1) node[pos = 0.5, above, color = black]{$B$};
          \draw [very thick, color = blue] (8, 0) -- (8.66, 0) node[pos = 0.5, above, color = black]{$A$};
          \draw [very thick, color = red] (7, 0) -- (8, 0) node[pos = 0.5, above, color = black]{$B$};
          
          \draw[fill] (9,1) circle (2pt);
          \draw[fill] (8,0) circle (2pt);
        \end{tikzpicture}
    \caption{An example of termination.  The resulting map has an attractive fixed point.}
    \label{fig:terminateInduction}
    \end{center}
    \end{figure}
    
\end{enumerate}

\subsection{Understanding when the Rauzy algorithm terminates}
\label{sec:outlineTermination}

Our goal now is to understand when the process of iteratively applying Rauzy induction to our original $(\rho_A, \rho_B)$-map terminates after finitely many steps, since terminating maps have an attracting \emph{periodic orbit} or \emph{critical orbit} (orbit that eventually hits the break point $x_T$). 

We begin with an empty word $w$ and a map $T \in \mathcal{I}(\rho_A, \rho_B)$.  
If a Rauzy induction step is possible, we apply the appropriate (R) or (L) induction and record the letter $R$ or $L$ at the end of $w$. We repeat on the resulting maps either infinitely or until we reach a termination step. 

We assume that our initial $(\rho_A, \rho_B)$ map $T$ is a map of the unit interval $[0,1]$ to itself. We then parameterize $(\rho_A, \rho_B)$-maps by the position of their breakpoint $x_T$ in $[0,1]$. For a finite word $w$, we let $I(w) \subset [0,1]$ be the set of maps that follow the word $w$ in their sequence of left and right induction steps. We also let $H(w) \subset I(w)$ be the set of maps that follow word $w$ and whose next step is a termination step. 

The outline of our argument will go as follows: 

\begin{enumerate}
    \item We will first consider the contracting case when $\rho_A, \rho_B < 1$. In this case, we will explicitly find the intervals of discontinuity points $H(w) \subset I(w) \subset [0,1]$ parameterizing those maps in $\mathcal{I}(\rho_A, \rho_B)$ where the Rauzy induction steps follow the sequence of inductions given by the word $w$, or terminate after that sequence.
    \item Building off of our work in the previous step, we will find that $|H(w)|/|I(w)|$ is uniformly bounded below.  Since the set of parameters $H(w)$ where the Rauzy induction terminates is constructed like the complement of the middle thirds Cantor set, this bound will show that the terminating parameters form a full measure subset of the interval whose complement is a Cantor set. 
    
    \item Finally, we will move on to the case when one of $\rho_A$ or $\rho_B$ is greater than $1$. We will find that with the exception of one starting parameter, the Rauzy induction will eventually land us in a terminating case or in a case where our scaling factors $\rho'_A$ and $\rho'_B$ are both contracting. We can then use our work from the $\rho_A, \rho_B < 1$ case to again conclude that the terminating parameters form a full measure subset of the interval that is the complement of a Cantor set.
    \item Using the results in steps 2 and 3, we can lift the attracting orbits on these $(\rho_A, \rho_B)$-maps to attracting orbits on corresponding one-holed dilation tori to provide a proof of Theorem \ref{thm:1}.
\end{enumerate}

\subsection{The contracting case}
\label{sec:pApB<1}

We start with the case where $\rho_A, \rho_B < 1$.  
Let $T \in \mathcal{I}(\rho_A, \rho_B)$ where we have lengths $\lambda_A = x_T$, 
$\lambda_B = 1 - x_T$ for $x_T \in (0, 1)$.  
We can consider a word $w$ of length $\ell$, $w = w_1\cdots w_\ell$ with $w_i \in \{R, L\}$, encoding a sequence of Rauzy induction moves.  
We associate to $w$ sequences of integers $m_A(i), m_B(i), n_A(i), n_B(i)$ with 
$(m_A(0), m_B(0), n_A(0), n_B(0)) = (1, 0, 0, 1)$ and the recurrence
\begin{align*}
    (m_A(i + 1), & m_B(i + 1),  n_A(i + 1), n_B(i + 1)) \\
    & = \begin{cases} (m_A(i), m_B(i), m_A(i) + n_A(i), m_B(i) + n_B(i)) & w_{i + 1} = R \\
    (m_A(i) + n_A(i), m_B(i) + n_B(i), n_A(i), n_B(i))& w_{i + 1} = L. \end{cases}
\end{align*}

Let $(m_A, m_B, n_A, n_B) := (m_A(\ell), m_B(\ell), n_A(\ell), n_B(\ell))$ be the final term.  
Then upon applying the Rauzy inductions in $w$ to $T$, the resulting map is in $\mathcal{I}(\rho_A^{m_A}\rho_B^{m_B}, \rho_A^{n_A}\rho_B^{n_B})$.  
We can define a sequence of length transformation matrices $M_i$ with $M_0 = I$ and
\[
    M_{i} = \begin{cases} R_{m_A(i), m_B(i)} M_{i - 1} & w_i = R \\ 
    L_{n_A(i), n_B(i)} M_{i - 1} & w_i = L, \end{cases}
\]
where the $R$ and $L$ matrices are defined as in Section \ref{sec:pApBInduction}, so that the lengths of the final intervals up to scaling are $\begin{pmatrix} \lambda_A' \\ \lambda_B' \end{pmatrix} 
:= M_\ell \begin{pmatrix} \lambda_A \\ \lambda_B \end{pmatrix} 
= M_\ell \begin{pmatrix} x_T \\ 1 - x_T \end{pmatrix}$.

Note that the sequence of moves $w$ is the valid sequence of Rauzy inductions executed on $T$ if and only if $\lambda'_A, \lambda'_B \geq 0$.  
Indeed, if the sequence is invalid then the result of some step will give a negative entry in the length vector.  
Once one entry of the length vector is negative, it is easy to see that further multiplications by the $L$ and $R$ matrices keep some entry negative.

Suppose the final matrix is $M_\ell =: \begin{pmatrix} a & b \\ c & d \end{pmatrix}$.  The following lemma will help us identify the parameters corresponding to a particular set of Rauzy induction moves. 
\begin{lemma}
\label{lem:8}
    For any word $w \in \{R, L\}^\ell$, 
    the corresponding length transformation matrix $M_\ell$ has $a, d \geq 0$ and $b, c \leq 0$.  Also, $\det M_\ell \geq 0$.  
    As a consequence, $0 \leq \frac{-b}{a - b} \leq \frac{d}{d - c} \leq 1$.
\end{lemma}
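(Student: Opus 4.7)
The plan is to prove the sign pattern and determinant condition by induction on the word length $\ell$, and then deduce the chain of inequalities by elementary manipulation together with the determinant bound. To make the consequence cleanly stated, I will strengthen the inductive hypothesis slightly to $a, d > 0$ (rather than just $\geq 0$), which guarantees that the denominators $a-b$ and $d-c$ in the final inequality are strictly positive.

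The base case $\ell = 0$ is immediate: $M_0 = I$ has $a = d = 1 > 0$, $b = c = 0$, and $\det M_0 = 1 > 0$. For the inductive step, suppose $M_{i-1} = \begin{pmatrix} a & b \\ c & d \end{pmatrix}$ satisfies $a, d > 0$, $b, c \leq 0$, and $\det M_{i-1} > 0$. If $w_i = R$, then writing $\alpha := \rho_A^{-m_A(i)}\rho_B^{-m_B(i)} > 0$, a direct computation gives
$$M_i = R_{m_A(i), m_B(i)} M_{i-1} = \begin{pmatrix} a - \alpha c & b - \alpha d \\ \alpha c & \alpha d \end{pmatrix}.$$
Each sign condition is immediate from the inductive hypothesis: $a - \alpha c \geq a > 0$ because $-\alpha c \geq 0$; $b - \alpha d \leq b \leq 0$ because $-\alpha d \leq 0$; and the bottom row inherits the signs of $c$ and $d$ scaled by $\alpha > 0$. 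The determinant rescales to $\alpha \cdot \det M_{i-1} > 0$. The case $w_i = L$ is entirely symmetric with $\beta := \rho_A^{-n_A(i)}\rho_B^{-n_B(i)}$ playing the role of $\alpha$: the new matrix is $\begin{pmatrix} \beta a & \beta b \\ c - \beta a & d - \beta b \end{pmatrix}$ and the same sign and determinant checks go through.

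For the consequence, the inductive conclusion $a, d > 0$ and $b, c \leq 0$ immediately gives $a - b \geq a > 0$ and $d - c \geq d > 0$, so both ratios are well-defined. The bounds $0 \leq \frac{-b}{a-b}$ and $\frac{-b}{a-b} \leq 1$ follow from $-b \geq 0$ and from $-b \leq a - b$ (using $a \geq 0$); the analogous bounds on $\frac{d}{d-c}$ are symmetric. For the middle inequality, cross-multiplying by the positive denominators reduces $\frac{-b}{a-b} \leq \frac{d}{d-c}$ to $-b(d-c) \leq d(a-b)$, which expands and simplifies to $ad - bc \geq 0$, i.e., $\det M_\ell \geq 0$, which I have just established.

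There is no real obstacle here; the argument is essentially bookkeeping about sign patterns under multiplication by the matrices $R_{m_A, m_B}$ and $L_{n_A, n_B}$, both of which have the pattern of a nonnegative diagonal and a single nonpositive off-diagonal entry. The only subtlety worth flagging is the need to strengthen $a, d \geq 0$ to $a, d > 0$ in the induction so that the ratios in the consequence do not become $0/0$; this strengthening costs nothing since $\det M_{i-1} > 0$ would otherwise be violated in the degenerate case.
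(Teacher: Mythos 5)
Your proof is correct and takes essentially the same approach as the paper: induction on the word length, checking the sign pattern of the entries of $R_{m_A,m_B}M_{i-1}$ and $L_{n_A,n_B}M_{i-1}$ together with multiplicativity of the (positive) determinant, and then deducing the chain of inequalities (the middle one via cross-multiplication reducing to $\det M_\ell \geq 0$). Your strengthening of the inductive hypothesis to $a, d > 0$ is a harmless refinement that makes the positivity of the denominators $a-b$ and $d-c$ explicit, a point the paper leaves implicit.
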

\begin{proof}
    We will show this by induction. We first note that the lemma is true for $M_0 = I$. Furthermore, all $R_{m_A, m_B}$ and $L_{n_A, n_B}$ have positive determinant, so $\det M_\ell \geq 0$.  
    \\
    Now we suppose the lemma holds for $M_i = \begin{pmatrix} a & b \\ c & d \end{pmatrix}$.  
    Then $R_{m_A, m_B}M_i$ has diagonals $d\rho_A^{-m_A}\rho_B^{-m_B} \geq 0$ 
    and $a - c\rho_A^{-m_A}\rho_B^{-m_B} \geq 0$, and off-diagonal entries $b - d\rho_A^{-m_A}\rho_B^{-m_B} \leq 0$ and $c\rho_A^{-m_A}\rho_B^{-m_B} \leq 0$.
    A similar computation shows the result for $L_{n_A, n_B}M_i$, and the result follows by induction.  These inequalities are enough to show $0 \leq \frac{-b}{a - b} \leq \frac{d}{d - c} \leq 1$.
\end{proof}

This lemma allows us to identify the $(\rho_A, \rho_B)$-maps which undergo a sequence of Rauzy inductions corresponding to the word $w$:

\begin{proposition}
\label{prop:9}
    Let $M_\ell = \begin{pmatrix} a & b \\ c & d \end{pmatrix}$ be the length transformation matrix for the word $w$. The interval $I(w) \subseteq D$ of parameters $x_T$ whose corresponding $(\rho_A, \rho_B)$-map may undergo Rauzy inductions corresponding to the word $w$ is then
    \[
        I(w) = \left[ \frac{-b}{a - b}, \frac{d}{d - c} \right]
    \]
\end{proposition}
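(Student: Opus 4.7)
The plan is to directly translate the condition ``$w$ is a valid sequence of Rauzy inductions starting from $T$'' into two linear inequalities on the parameter $x_T$, using the matrix $M_\ell$ and the sign information established in Lemma \ref{lem:8}. The starting observation is the one already flagged in the surrounding text: $w$ is admissible for a map with lengths $(\lambda_A,\lambda_B)$ if and only if both components of the final length vector $M_\ell(\lambda_A,\lambda_B)^\top$ are non-negative, since once a component becomes negative at any intermediate step, the subsequent $R$- and $L$-matrices cannot bring it back to being non-negative (a separate quick sub-claim to verify, again by induction on the remaining sub-word).

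Now I would specialize to the parameterization $\lambda_A=x_T$, $\lambda_B=1-x_T$ and write the two final lengths as affine functions of $x_T$:
\begin{align*}
\lambda_A' &= a x_T + b(1-x_T) = (a-b)x_T + b, \\
\lambda_B' &= c x_T + d(1-x_T) = -(d-c)x_T + d.
\end{align*}
By Lemma \ref{lem:8}, $a\geq 0$, $b\leq 0$, $c\leq 0$, $d\geq 0$, so $a-b\geq 0$ and $d-c\geq 0$, and both of these are strictly positive whenever $M_\ell\neq 0$ (which is automatic because $\det M_\ell\geq 0$ and the diagonal entries are non-negative, forcing at least one of them to be positive after any induction step). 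Solving $\lambda_A'\geq 0$ gives $x_T\geq \tfrac{-b}{a-b}$, and solving $\lambda_B'\geq 0$ gives $x_T\leq \tfrac{d}{d-c}$.

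Together, these two inequalities give $I(w)=\bigl[\tfrac{-b}{a-b},\tfrac{d}{d-c}\bigr]$, and the final chain of inequalities in Lemma \ref{lem:8} guarantees that this interval is a genuine (possibly degenerate) subinterval of $[0,1]$, so no further compatibility with the unit domain needs to be imposed. The only real subtlety—hardly an obstacle, but worth being explicit about—is the direction-of-inequality step: because $d-c\geq 0$ one must divide by $-(d-c)\leq 0$ in the second line, flipping the inequality, which is exactly what produces the upper endpoint $\tfrac{d}{d-c}$ rather than its negative. Everything else is bookkeeping from Lemma \ref{lem:8}.
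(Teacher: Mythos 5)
Your proposal is correct and follows essentially the same route as the paper: translate admissibility of $w$ into nonnegativity of both entries of $M_\ell(x_T,1-x_T)^\top$, solve the two resulting affine inequalities in $x_T$, and invoke Lemma \ref{lem:8} for the signs of $a-b$ and $d-c$ and for the containment of the resulting interval in $[0,1]$. The extra care you take about strict positivity of $a-b$ and $d-c$ (via invertibility of $M_\ell$) is a harmless refinement of what the paper leaves implicit.
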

\begin{proof}
    Let $x_T \in D$ and let $T: D \to D$ be the $(\rho_A, \rho_B)$-map where $A = [0, x_T)$ and $B = (x_T, 1]$.
    \\
    $M_\ell$ sends the initial length vector $\lambda = (x_T, 1 - x_T)^\top$ to $\lambda' = ((a - b)x_T + b, (c - d)x_T + d)^\top$.
    \\
    $w$ is a valid sequence of Rauzy steps to apply to $T$ if and only if both elements of $\lambda'$ are nonnegative.  
    But this says $x_T \geq -b/(a - b)$ and $x_T \leq d/(d - c)$.  
    By Lemma \ref{lem:8}, these two inequalities define the desired interval of parameters $I(w)$.
\end{proof}

Within $I(w)$, there is a subset $H(w)$ consisting of discontinuity points $x_T$ such that Rauzy induction on the corresponding map terminates after the sequence of moves $w$.  
In fact, $H(w)$ is a subinterval of $I(w)$ which may be determined in terms of $M_\ell$:

\begin{proposition}
\label{prop:10}
    The interval $H(w) \subseteq I(w)$ parameterizing $(\rho_A, \rho_B)$-maps which reach the terminating case (2) after valid Rauzy inductions corresponding to the word $w$ is
    \[
        H(w) = \left[ 
            \frac{d - b\rho_A^{-m_A}\rho_B^{-m_B}}{(a - b)\rho_A^{-m_A}\rho_B^{-m_B} + d - c}, 
            \frac{d - b\rho_A^{n_A}\rho_B^{n_B}}{(a - b)\rho_A^{n_A}\rho_B^{n_B} + d - c} 
        \right]
    \]
\end{proposition}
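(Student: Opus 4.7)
The plan is to mirror the argument of Proposition~\ref{prop:9}. After applying the sequence $w$ of Rauzy inductions, the resulting map $T'$ lives in $\mathcal{I}(\rho_A^{m_A}\rho_B^{m_B}, \rho_A^{n_A}\rho_B^{n_B})$, and its top intervals have lengths
\[
\begin{pmatrix} \lambda'_A \\ \lambda'_B \end{pmatrix} = M_\ell \begin{pmatrix} x_T \\ 1 - x_T \end{pmatrix} = \begin{pmatrix} (a-b)x_T + b \\ -(d-c)x_T + d \end{pmatrix}.
\]
From this, the map $T$ reaches termination after $w$ precisely when $T'$ falls into case~(2), that is, when both $T'(A) \subseteq B$ and $T'(B) \subseteq A$. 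Translated into conditions on lengths, this is the conjunction
\[
\rho_A^{m_A}\rho_B^{m_B}\,\lambda'_A \leq \lambda'_B \qquad \text{and} \qquad \rho_A^{n_A}\rho_B^{n_B}\,\lambda'_B \leq \lambda'_A.
\]

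Next, I would substitute the expressions for $\lambda'_A,\lambda'_B$ into each inequality and solve for $x_T$. Both resulting inequalities are linear in $x_T$; after using Lemma~\ref{lem:8} to determine the signs of the coefficients (specifically that $a-b \geq 0$ and $d-c \geq 0$, so that both total coefficients of $x_T$ are strictly positive), the first inequality provides an upper bound and the second a lower bound on $x_T$. A direct calculation yields the two endpoints claimed in the statement, giving the closed interval $H(w)$.

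Finally, I would verify that $H(w) \subseteq I(w)$ is a genuine (possibly degenerate) subinterval. This amounts to checking that the lower endpoint does not exceed the upper endpoint, which follows from the sign information in Lemma~\ref{lem:8} together with $\det M_\ell \geq 0$: after clearing denominators, the required inequality reduces to a nonnegative combination of $\det M_\ell$ with the nonnegative quantities $\rho_A^{m_A+n_A}\rho_B^{m_B+n_B}$, $a$, $d$, $-b$, $-c$. One also checks that the endpoints of $H(w)$ lie between the endpoints of $I(w)$ by comparing them to $-b/(a-b)$ and $d/(d-c)$ in the same fashion.

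The main obstacle I anticipate is bookkeeping: the substitution produces fractions whose signs depend on the sign of $b$ and $c$ (which are nonpositive, not necessarily negative), and one must be careful not to flip inequalities when clearing denominators. Once the sign conventions of Lemma~\ref{lem:8} are applied consistently, the algebraic manipulations are routine.
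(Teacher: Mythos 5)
Your proposal is correct in method and is essentially the paper's own proof: apply $M_\ell$ to $(x_T,\,1-x_T)^\top$, impose the two case-(2) length inequalities $\rho_A^{m_A}\rho_B^{m_B}\lambda'_A \le \lambda'_B$ and $\rho_A^{n_A}\rho_B^{n_B}\lambda'_B \le \lambda'_A$, and solve the resulting linear inequalities in $x_T$, with Lemma \ref{lem:8} supplying the sign information. One caveat: if you actually carry out the substitution you get the upper bound $x_T \le \frac{d - b\rho_A^{m_A}\rho_B^{m_B}}{(a-b)\rho_A^{m_A}\rho_B^{m_B} + d - c}$ and the lower bound $x_T \ge \frac{d - b\rho_A^{-n_A}\rho_B^{-n_B}}{(a-b)\rho_A^{-n_A}\rho_B^{-n_B} + d - c}$, i.e.\ the printed endpoints with the roles of $(m_A,m_B)$ and $(n_A,n_B)$ interchanged; the sanity check $w=\emptyset$ (where $M_0 = I$, the exponents are $(1,0,0,1)$, and termination happens exactly for $x_T \in [\rho_B/(1+\rho_B),\,1/(1+\rho_A)]$, in agreement with Section \ref{sec:pA>1}) shows this is a typo in the statement rather than an error in your derivation, so do not force your algebra to reproduce the printed formula verbatim.
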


\begin{proof}
    If $x_T \in H(w)$, then the final transformation after all inductions, $T'$, must be in case (2).  
    In terms of interval lengths, this means that 
    $\lambda'_A \geq \rho_A^{n_A}\rho_B^{n_B} \lambda'_B$ 
    and $\lambda'_B \geq \rho_A^{m_A}\rho_B^{m_B} \lambda'_A$.
    Using $\lambda' = M_\ell (x_T, 1-x_T)^\top$, these inequalities define the desired interval of parameters $H(w)$.
\end{proof}

Note that when $w \neq v$, $H(w) \cap H(v) = \emptyset$.  
This is because the Rauzy induction algorithm has no ambiguity, 
there is only one valid induction word of length $k$ for any given $(\rho_A, \rho_B)$-map.  
Let $H_k := \bigcup_{|w| \leq k} H(w)$ parameterize the set of all $(\rho_A, \rho_B)$-maps which terminate after at most $k$ Rauzy inductions, 
and $H = \bigcup_k H_k$ parameterize the set of all $(\rho_A, \rho_B)$-maps which terminate at all under the Rauzy algorithm.  
$H$ has the same construction as the complement of the Cantor triadic set.  
Indeed, $H_k$ adds $2^k$ intervals, one within each gap between the intervals in $H_{k-1}$.  
These gaps are the intervals $I(w)$ where $|w| = k$.  
For some $(\rho_A, \rho_B)$-map, $H_2$ might look like the set in Figure \ref{fig:H2}.

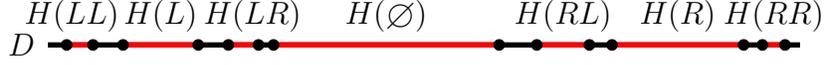
\begin{figure}[ht]
\begin{center}
    \begin{tikzpicture}
        \draw [line width = 0.07cm] (0, 0) -- (10, 0) node[pos = 0, left, color = black]{$D$};
        
        \draw [line width = 0.07cm, color = red] (3, 0) -- (6, 0) node[pos = 0.5, above, color = black]{$H(\emptyset)$};
        
        \draw [line width = 0.07cm, color = red] (1, 0) -- (2, 0) node[pos = 0.5, above, color = black]{$H(L)$};
        \draw [line width = 0.07cm, color = red] (7.5, 0) -- (9.25, 0) node[pos = 0.5, above, color = black]{$H(R)$};
        
        \draw [line width = 0.07cm, color = red] (0.25, 0) -- (0.6, 0) node[pos = 0.2, above, color = black]{$H(LL)$};
        \draw [line width = 0.07cm, color = red] (2.4, 0) -- (2.8, 0) node[pos = 0.8, above, color = black]{$H(LR)$};
        \draw [line width = 0.07cm, color = red] (6.5, 0) -- (7.2, 0) node[pos = 0.5, above, color = black]{$H(RL)$};
        \draw [line width = 0.07cm, color = red] (9.5, 0) -- (9.8, 0) node[pos = 0.5, above, color = black]{$H(RR)$};
        
        \draw[fill] (3, 0) circle (2pt);
        \draw[fill] (6, 0) circle (2pt);
        \draw[fill] (1, 0) circle (2pt);
        \draw[fill] (2, 0) circle (2pt);
        \draw[fill] (7.5, 0) circle (2pt);
        \draw[fill] (9.25, 0) circle (2pt);
        \draw[fill] (0.25, 0) circle (2pt);
        \draw[fill] (0.6, 0) circle (2pt);
        \draw[fill] (2.4, 0) circle (2pt);
        \draw[fill] (2.8, 0) circle (2pt);
        \draw[fill] (6.5, 0) circle (2pt);
        \draw[fill] (7.2, 0) circle (2pt);
        \draw[fill] (9.5, 0) circle (2pt);
        \draw[fill] (9.8, 0) circle (2pt);
    \end{tikzpicture}
\caption{An example of $H_2$ in red.  Each interval corresponds to a set $H(w)$ for $|w| \leq 2$.}
\label{fig:H2}
\end{center}
\end{figure}

For every $x_T \in H$, since the Rauzy induction terminates after finitely many steps, we can find a periodic orbit for $T$ beginning at the attractive fixed point of the final map $T'$.  
This orbit then attracts all other orbits and corresponds to an attracting orbit for the directional foliation on a corresponding dilation surface.  

We now check that $H$ is of full measure in $[0, 1]$.  
This is where we utilize $\rho_A, \rho_B < 1$.
The following intermediate bound does much of the heavy lifting:

\begin{lemma}
\label{lem:11}
    Let $\rho := \max\{\rho_A, \rho_B\}$ and let 
    $M_\ell = \begin{pmatrix} a & b \\ c & d \end{pmatrix}$.  
    The quantity $\displaystyle s := \frac{a - b}{d - c}$ satisfies
    \[
        1 - \rho < s < \frac{1}{1 - \rho}.
    \]
\end{lemma}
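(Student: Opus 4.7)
The plan is to prove the bound by induction on the length $\ell$ of the word $w$, tracking how $s = (a-b)/(d-c)$ transforms under a single Rauzy induction step. The base case $\ell = 0$ is immediate: $M_0 = I$ gives $s = 1/1 = 1$, and since $0 < \rho < 1$ we have $1 - \rho < 1 < 1/(1-\rho)$.

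For the inductive step, I would first establish the recurrence for $s$ under each type of step by direct matrix multiplication. Writing $\mu := \rho_A^{-m_A(i)} \rho_B^{-m_B(i)}$ for an $R$-step and $\nu := \rho_A^{-n_A(i)} \rho_B^{-n_B(i)}$ for an $L$-step, a short computation shows
\[
    s \xmapsto{\ R\ } \frac{s}{\mu} + 1, \qquad s \xmapsto{\ L\ } \frac{\nu s}{1 + \nu s}.
\]
The second observation is that $\mu, \nu \geq \rho^{-1}$: the recurrence $(m_A(0), m_B(0), n_A(0), n_B(0)) = (1, 0, 0, 1)$ together with the fact that both $m_A$ and $n_B$ can only stay the same or increase forces $m_A(i) \geq 1$ and $n_B(i) \geq 1$ at every step. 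Combined with $\rho_A, \rho_B \leq \rho < 1$, this gives $\mu \geq \rho_A^{-1} \geq \rho^{-1}$ and $\nu \geq \rho_B^{-1} \geq \rho^{-1}$.

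With these two ingredients, the inductive step is a routine check using the hypothesis $1 - \rho < s < 1/(1-\rho)$. For an $R$-step, $s' = s/\mu + 1 > 1 > 1 - \rho$ for the lower bound, and for the upper bound $s' < \rho \cdot \frac{1}{1-\rho} + 1 = \frac{1}{1-\rho}$, using $1/\mu \leq \rho$. For an $L$-step, rewrite $s' = 1 - \frac{1}{1+\nu s}$; the upper bound $s' < 1 < 1/(1-\rho)$ is immediate, while the lower bound $s' > 1 - \rho$ rearranges to $\nu s > (1-\rho)/\rho$, which follows from $\nu \geq \rho^{-1}$ and $s > 1 - \rho$.

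I expect no real obstacle here beyond bookkeeping; the one place to be careful is verifying that the exponents $m_A, n_B$ appearing in the transformation matrices $R_{m_A,m_B}$ and $L_{n_A,n_B}$ are always at least $1$, since this is what forces $\mu, \nu \geq \rho^{-1}$ and makes both bounds tight enough to propagate. The conclusion $0 < 1-\rho < s < 1/(1-\rho)$ follows by induction on $\ell$.
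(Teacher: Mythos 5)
Your proposal is correct and follows essentially the same route as the paper: induction on the word length, the transformation formulas $s \mapsto s/\mu + 1$ and $s \mapsto \nu s/(1+\nu s)$ under right and left induction, and the bound $\mu, \nu \geq \rho^{-1}$ (equivalently $\rho_A^{m_A}\rho_B^{m_B}, \rho_A^{n_A}\rho_B^{n_B} \leq \rho$) to propagate $1-\rho < s < \tfrac{1}{1-\rho}$. Your explicit observation that $m_A(i) \geq 1$ and $n_B(i) \geq 1$ throughout is just a slightly more careful statement of the fact the paper uses implicitly.
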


\begin{proof}
    We induct on word length.  
    When $\ell = 0$, $M_0 = I$, $s = 1$, so $1 - \rho < 1 < \frac{1}{1 - \rho}$.
    
    Suppose the lemma holds when the length transformation matrix is
    $M_\ell = \begin{pmatrix} a & b \\ c & d\end{pmatrix}$ with exponents $(m_A, m_B, n_A, n_B)$.  Recall that $M_{\ell + 1}$ is attained either by multiplying $M_\ell$ by the right-induction matrix $R_{m_A, m_B}$ or the left-induction matrix $L_{n_A, n_B}$.  We can bound the new value of $s$, which we denote as $s_R$ or $s_L$ respectively, as follows. 
    
    For $R_{m_A, m_B} M_\ell$, the quantity is $s_R = \rho_A^{m_A}\rho_B^{m_B}s + 1$.  Since $\rho \geq \rho_A^{m_A}\rho_B^{m_B}$,
    \[
        1 - \rho < \rho_A^{m_A}\rho_B^{m_B} s + 1 
        < \frac{\rho_A^{m_A}\rho_B^{m_B}}{1 - \rho} + 1 \leq \frac{1}{1 - \rho}.
    \]
    
    For $L_{n_A, n_B} M_\ell$, the quantity is $s_L = \frac{s}{\rho_A^{n_A}\rho_B^{n_B} + s}$.  Since $\rho \geq \rho_A^{n_A}\rho_B^{n_B}$,
    \[
        1 - \rho \leq \frac{1 - \rho}{\rho_A^{n_A}\rho_B^{n_B} + 1 - \rho} 
        < \frac{s}{\rho_A^{n_A}\rho_B^{n_B} + s} 
        < \frac{1}{1 - \rho}.
    \]
    So the bound $1 - \rho < s < \frac{1}{1 - \rho}$ is preserved under left or right inductions.
\end{proof}

With the help of this bound, we may bound $|H(w)| / |I(w)|$ below:

\begin{proposition}
\label{prop:12}
    There exists an $N$ and $\delta > 0$ such that for all words $w$ with $|w| > N$, we have $|H(w)|/|I(w)| \geq \delta$. As a consequence, $H = \bigcup_w H(w)$ has full measure. 
    
    Thus, for any $\rho_A < 1, \rho_B < 1$, there is a measure zero Cantor set $C \subset [0,1]$ of break points $x_T$ such that on the complement of $C$, the corresponding $(\rho_A, \rho_B)$ map accumulates onto a periodic or critical orbit.
\end{proposition}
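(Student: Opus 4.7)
The plan is to derive a clean closed form for the ratio $|H(w)|/|I(w)|$, bound it below uniformly using Lemma \ref{lem:11} together with monotonicity of the induction exponents, and then deduce that the non-terminating set has measure zero. Writing $M_\ell = \begin{pmatrix} a & b \\ c & d \end{pmatrix}$, Proposition \ref{prop:9} gives $|I(w)| = \det M_\ell /[(a-b)(d-c)]$. Setting $\alpha := \rho_A^{m_A}\rho_B^{m_B}$ and $v := \rho_A^{n_A}\rho_B^{n_B}$, expansion of the endpoint formula from Proposition \ref{prop:10} — with the cross-terms cancelling — simplifies the numerator of $|H(w)|$ to $(1 - \alpha v)\det M_\ell$, yielding
\[
    |H(w)| \;=\; \frac{(1 - \alpha v)\,\det M_\ell}{\bigl(\alpha(a-b) + (d-c)\bigr)\bigl((a-b) + v(d-c)\bigr)}.
\]
Taking the ratio the determinants cancel, and introducing $s := (a-b)/(d-c)$ gives
\[
    \frac{|H(w)|}{|I(w)|} \;=\; \frac{(1 - \alpha v)\,s}{(\alpha s + 1)(s + v)}.
\]

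To bound this from below, I combine three ingredients. Lemma \ref{lem:11} traps $s$ in $(1-\rho,\,1/(1-\rho))$ with $\rho := \max(\rho_A,\rho_B)$. The recurrence for $(m_A, m_B, n_A, n_B)$ starting at $(1,0,0,1)$ is entrywise nondecreasing under both induction moves, so $m_A \geq 1$ and $n_B \geq 1$ throughout; this forces $\alpha \leq \rho_A$ and $v \leq \rho_B$, hence $1 - \alpha v \geq 1 - \rho_A\rho_B > 0$. Plugging these in — $s \geq 1-\rho$ in the numerator, $\alpha s + 1 \leq 1 + \rho_A/(1-\rho)$ and $s + v \leq 1/(1-\rho) + \rho_B$ in the denominator — I obtain a positive constant $\delta = \delta(\rho_A,\rho_B)$ with $|H(w)|/|I(w)| \geq \delta$ for every $w$, so $N = 0$ suffices.

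For the full-measure conclusion, for each $w$ the interval $I(w)$ partitions (up to endpoints) as $H(w) \sqcup I(wR) \sqcup I(wL)$, so $|I(wR)| + |I(wL)| \leq (1-\delta)|I(w)|$. Inducting on word length gives $\sum_{|w|=k}|I(w)| \leq (1-\delta)^k$, and since the non-terminating set $C := [0,1] \setminus H$ lies inside $\bigcup_{|w|=k} I(w)$ for every $k$, we conclude $|C| \leq (1-\delta)^k \to 0$. That $C$ forms a Cantor set follows from the self-similar removal paralleling the middle-thirds construction: $C$ is closed (complement of a union of open intervals $H(w)^\circ$), nowhere dense (each nested $I(w)$ has the positive-length $H(w)$ removed from $C$), and perfect (endpoints of successively removed intervals accumulate in $C$ at every scale).

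I expect the main obstacle to be the algebraic step that collapses $|H(w)|/|I(w)|$ to the closed form $(1-\alpha v)\,s/[(\alpha s + 1)(s + v)]$: recognizing that the cross-difference in the numerator of $|H(w)|$ factors as $(1 - \alpha v)\det M_\ell$ is pivotal, since it causes the $M_\ell$-dependence to cancel and reduces the bound to controlling three tractable scalars — $s$ via Lemma \ref{lem:11}, and $\alpha, v$ via monotonicity of the exponent recurrence.
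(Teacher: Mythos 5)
Your proposal is correct and takes essentially the same route as the paper: compute $|H(w)|/|I(w)|$ in closed form from Propositions \ref{prop:9} and \ref{prop:10}, bound it below using Lemma \ref{lem:11}, and run the same nested-removal (middle-thirds-style) argument for full measure and the Cantor structure of the complement. One minor algebra slip: the correct closed form is $\frac{(1-\alpha v)\,s}{(sv+1)(s+\alpha)}$ — your denominator has $\alpha$ and $v$ interchanged — but this is harmless since either version is bounded below by a constant depending only on $\rho_A,\rho_B$; indeed your use of $m_A\ge 1$, $n_B\ge 1$ to get $\alpha v\le \rho_A\rho_B<1$ yields a uniform bound valid for every word (so $N=0$ works), which is slightly sharper than the paper's estimate, which only becomes positive for $|w|>N$ with $N>2\log(1-\rho)/\log\rho$.
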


\begin{proof}
    In Lemma \ref{lem:11}, we found that if $M_\ell = \begin{pmatrix} a & b \\ c & d \end{pmatrix}$ is the length transformation matrix and $\rho = \max\{\rho_A, \rho_B\}$, then $s := \frac{a - b}{d - c}$ is bounded as $1 - \rho < s < \frac{1}{1 - \rho}$.  Using Propositions \ref{prop:9} and \ref{prop:10}, after some algebraic manipulations we find that 
    \[
        \frac{|H(w)|}{|I(w)|} 
        = \frac{1}{s\rho_A^{n_A}\rho_B^{n_B} + 1} - \frac{1}{s\rho_A^{-m_A}\rho_B^{-m_B} + 1} 
        \geq \frac{1 - \rho}{\rho_A^{n_A}\rho_B^{n_B} + 1 - \rho} - \frac{1}{(1 - \rho)\rho_A^{-m_A}\rho_B^{-m_B} + 1}.
    \]
    We denote the last expression above by $\delta(m_A, m_B, n_A, n_B)$.  Let $N > 2\log(1 - \rho) / \log(\rho)$.  \\
    If our initial word $w$ has length $|w| > N$, then $m_A + m_B + n_A + n_B \geq |w| > N$ and another algebraic manipulation shows $\delta(m_A, m_B, n_A, n_B) > 0$.
    Moreover, $\delta(m_A, m_B, n_A, n_B)$ is increasing in each index. 
    Let $\delta := \min\{ \delta(m_A, m_B, n_A, n_B) : m_A + m_B + n_A + n_B = N + 1\} > 0$.  
    Then $|H(w)| / |I(w)| \geq \delta$ for all $|w| > N$.
    
    Recall the intermediate sets $H_N = \bigcup_{|w| \leq N} H(w)$.  
    Now that we have $|H(w)| \geq \delta |I(w)|$ for all $|w| > N$, in every step of the Cantor construction of $H$ after the $N$th step we remove at least a $\delta$ proportion of what remains.  
    Then $|D \setminus H| \leq (1 - \delta)^n|D\setminus H_N|$ for all $n$, corresponding to the $(n + N)th$ step.  
    As $n \to \infty$, $|D\setminus H| = 0$. This shows that $H$ has full measure in $D$.
    
    We have already seen that $C := D \setminus H$ arises topologically from the same process as the Cantor middle thirds construction, perhaps modulo the endpoints of intervals used in the construction process.  Then $C$ is a measure zero Cantor set in the set of parameters of $(\rho_A, \rho_B)$-maps on whose complement we have maps with an attracting periodic or critical orbit.
\end{proof}

  Now, we can bring these results back to the dilation torus $X$ to prove part of Theorem \ref{thm:1}.  
Recall that on $X$ we have yet to understand the dynamics of directional foliations in three intervals of directions introduced in Section \ref{sec:flowStructure}: $I_1$, $I_3$, and $I_5$. What we have proven thus far handles the dynamics in $I_3$.

\begin{proposition}
\label{prop:13}
    Let $X$ be a one-holed dilation torus.  
    Within the interval $I_3$ as defined in Section \ref{sec:flowStructure},
    there is a full measure set of directions in which the flow is attracted to a periodic orbit or a saddle connection.  
    The complement of this set in $I_3$ is a Cantor set.
\end{proposition}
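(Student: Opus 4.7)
The plan is to transfer Proposition~\ref{prop:12} from the parameter space of $(\rho_A, \rho_B)$-maps to the one-parameter family of directions in $I_3$, using the natural parameterization of directions by break points on a fixed transversal.

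First I would observe that by Section~\ref{sec:I3directions}, for every direction $\theta \in I_3$ the first return map $T_\theta$ on the fixed transverse diagonal is a $(\rho_A, \rho_B)$-map with $\rho_A, \rho_B < 1$. Because the combinatorial type of the first return is constant on the open interval $I_3$ (the combinatorics can only change at saddle-connection directions, which lie at the endpoints of $I_3$), the dilation factors $\rho_A$ and $\rho_B$---each a product of edge dilation factors along the fixed return path---do not depend on $\theta$. Only the break point $x_T(\theta)$ varies with direction, and the map $\theta \mapsto x_T(\theta)$ is a continuous, strictly monotone, piecewise-smooth bijection from $I_3$ onto a subinterval $J \subseteq (0,1)$ of break-point parameters, and in particular a bi-Lipschitz homeomorphism on compact subintervals, so it sends measure-zero sets to measure-zero sets and Cantor sets to Cantor sets.

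Next I would invoke Proposition~\ref{prop:12} at this fixed pair $(\rho_A, \rho_B)$ to obtain a measure-zero Cantor set $C \subseteq [0,1]$ on whose complement the Rauzy-Veech algorithm terminates after finitely many steps and produces an attracting fixed point. Pulling $C \cap J$ back through the parameterization yields a measure-zero Cantor set $C_{I_3} \subseteq I_3$ of exceptional directions. For each $\theta \in I_3 \setminus C_{I_3}$ I would then argue that the attracting fixed point of the terminated Rauzy-induced map is either a genuine periodic point of the original $T_\theta$---whose suspension is an attracting closed orbit of the directional flow on $X$---or a critical point whose forward iterate lands on the breakpoint, which on $X$ corresponds to a trajectory terminating at the cone point and is therefore part of an attracting saddle connection.

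The main subtlety I anticipate is the final lifting step: verifying that Rauzy acceleration preserves the attractor structure, so that the attracting fixed point produced after finitely many inductions really is a periodic or critical point of the \emph{original} $T_\theta$ and that nearby noncritical orbits of $T_\theta$ (not merely of the accelerated map) are attracted to it. This is essentially immediate from the fact that each Rauzy step is by definition a first return to a subinterval, so orbits of the accelerated map are subsequences of orbits of $T_\theta$; but the explicit correspondence between AIET attractors and surface-level closed orbits or saddle connections---together with the observation that the suspension of a contracting AIET near its fixed point is an attracting closed leaf of the directional foliation---must be spelled out in order to upgrade ``attracting fixed point of the terminated AIET'' to ``attracting periodic orbit or saddle connection of the flow on $X$.''
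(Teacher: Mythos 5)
Your proposal is correct and follows essentially the same route as the paper: identify the first return maps for directions in $I_3$ as $(\rho_A,\rho_B)$-maps with both factors contracting and constant across $I_3$, use the (smooth, monotone) correspondence between direction and break point $x_T$ to transfer the full-measure/Cantor-set dichotomy of Proposition~\ref{prop:12}, and suspend the resulting attracting periodic or critical orbits to closed orbits or saddle connections on $X$. Your extra care about the lifting step and the regularity of $\theta \mapsto x_T(\theta)$ only makes explicit what the paper's proof asserts more briefly (the paper additionally notes that $x_T$ sweeps all of $[0,1]$ as the direction sweeps $I_3$, which settles your hedge about the image being a subinterval).
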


\begin{proof}
    Directions in $I_3$ point toward the longer sides of the polygonal model, and away from the shorter sides. 
    Then as we saw in Section \ref{sec:flowStructure}, the first return map is a $(\rho_A, \rho_B)$-map with $\rho_A < 1$ and $\rho_B < 1$.  
    Moreover, as the direction sweeps across $I_3$, in the first return map $\rho_A$ and $\rho_B$ are constant while the discontinuity point $x_T$ sweeps smoothly across $[0, 1]$.  
    This provides a smooth bijection between $I_3$ and $x_T \in [0, 1]$, which in particular preserves the measure zero Cantor set $C$.
    
    Then in $I_3$, the set of directions whose first return map has an attractive periodic orbit is full measure.  
    These attractive periodic orbits suspend to attractive periodic orbits on $X$.  
    Finally, the remaining directions in $I_3$ a measure zero Cantor set.
\end{proof}

\subsection{The expanding case}
\label{sec:pA>1}

If one of the intervals expands, the methods above need to be adjusted.  
Without loss of generality, we may assume $\rho_A \geq 1$ and $\rho_B < 1$.  

First, there is the concern of parameterizing $\mathcal{I}(\rho_A, \rho_B)$ by the discontinuity point $x_T$. 
A $(\rho_A, \rho_B)$-map with discontinuity point $x_T$ is injective if and only if $\rho_A x_T + \rho_B ( 1 - x_T) < 1$, i.e. $x_T \in \left[0, \frac{1 - \rho_B}{\rho_A - \rho_B}\right)$.  
We can check that Rauzy induction is done on the left when $x_T < \frac{\rho_B}{1 + \rho_B}$, Rauzy induction is done on the right when $x_T > \frac{1}{1 + \rho_A}$, 
and otherwise Rauzy induction terminates.  

So as long as $\rho_A\rho_B \geq 1$, we find that 
$x_T < \frac{1 - \rho_B}{\rho_A - \rho_B} \leq \frac{\rho_B}{1 + \rho_B}$.  
In this case, only left induction may occur.  
If instead $\rho_A\rho_B < 1$, we find that 
$\frac{1 - \rho_B}{\rho_A - \rho_B} > \frac{1}{1 + \rho_A}$, so all three possible cases of Rauzy induction may occur.  
In particular, right induction occurs for 
$x_T \in \left(\frac{1}{1 + \rho_A}, \frac{1 - \rho_B}{\rho_A - \rho_B}\right)$.
We note that this interval makes up a proportion of at most $\frac{\rho_A}{1 + \rho_A}$ of the original interval.

With this setup, we define a modified Rauzy algorithm for the case where $\rho_A \geq 1$.
We begin with a map $T_0 \in \mathcal{I}(\rho_A, \rho_B)$ and let $\rho_A(0) = \rho_A$, $\rho_B(0) = \rho_B$.  
We then produce maps $T_n \in \mathcal{I}(\rho_A(n), \rho_B(n))$ inductively by the following algorithm.  As we define the algorithm, we inductively show $\rho_B(n) < 1$ and $\rho_A(n + 1) \leq \rho_A(n)$ for each $n$ until the algorithm terminates.
\begin{enumerate}
    \item If $\rho_A(n)\rho_B(n) \geq 1$, then the only valid step is left induction. We perform left induction on $T_n$ to obtain a map $T_{n+1}$ with
    $\rho_A(n + 1) = \rho_A(n)\rho_B(n)$ and $\rho_B(n + 1) = \rho_B(n)$.  
    
    We repeat the algorithm on $T_{n + 1}$.
    
    We record that if $\rho_B(n) < 1$ then $\rho_A(n + 1) \leq \rho_A(n)$ and $\rho_B(n + 1) < 1$, so our next step is in case 2.
    \item If $\rho_A(n)\rho_B(n) < 1$, we have 3 possibilities:
    \begin{enumerate}
        \item If the valid step is left induction, we find $T_{n + 1}$ with 
        $\rho_A(n + 1) = \rho_A(n)\rho_B(n) < 1$ and $\rho_B(n + 1) = \rho_B(n) < 1$.  
        Then we are in the case $\mathcal{I}(\rho_A, \rho_B)$ with $\rho_A, \rho_B < 1$.  
        We already understand the behavior of such maps, so we are done.
        \item If the valid step is termination, we are done.
        \item If the valid step is right induction, we perform right induction on $T_n$ to obtain a map $T_{n + 1}$ with $\rho_A(n + 1) = \rho_A(n)$ and $\rho_B(n + 1) = \rho_A(n)\rho_B(n)$.  
        
        We repeat the algorithm on $T_{n + 1}$.
        
        We record that $\rho_A(n + 1) \leq \rho_A(n)$ and $\rho_B(n + 1) < 1$.
    \end{enumerate}
\end{enumerate}

\begin{lemma}
\label{lem:case1}
Given that $T$ is a $(\rho_A, \rho_B)$ map with $\rho_A \geq 1$ and $\rho_B < 1$, every instance of a case 1 step in the modified Rauzy algorithm must eventually be followed by a case 2 step. 
\end{lemma}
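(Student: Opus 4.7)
The plan is to argue by monotone decrease of the parameter product $\rho_A(n)\rho_B(n)$ under consecutive case 1 steps. First I would observe that the update formula for a case 1 step is $\rho_A(n+1) = \rho_A(n)\rho_B(n)$, $\rho_B(n+1) = \rho_B(n)$, so $\rho_B$ is an invariant across any run of consecutive case 1 iterations. Since by hypothesis $\rho_B(0) = \rho_B < 1$, we therefore have $\rho_B(n) = \rho_B < 1$ throughout any such run.

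Next I would track the product $\rho_A(n)\rho_B(n)$, which is the quantity that distinguishes cases 1 and 2. Suppose case 1 applies at steps $n_0, n_0+1, \dots, n_0+k-1$. A simple induction using the update formula gives $\rho_A(n_0+k) = \rho_A(n_0) \cdot \rho_B^{\,k}$, and hence
\[
\rho_A(n_0+k)\,\rho_B(n_0+k) \;=\; \rho_A(n_0)\,\rho_B^{\,k+1}.
\]
Because $\rho_B < 1$, the right-hand side tends to $0$ as $k \to \infty$; concretely, for any
\[
k \;>\; \frac{\log\bigl(\rho_A(n_0)\bigr)}{\log(1/\rho_B)},
\]
we obtain $\rho_A(n_0+k)\rho_B(n_0+k) < 1$. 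This strictly violates the defining inequality $\rho_A\rho_B \geq 1$ of case 1, so case 1 cannot apply at step $n_0+k$. Since every non-terminating step falls into exactly one of cases 1 or 2, step $n_0+k$ (or some earlier step in the run) must belong to case 2, which is exactly what the lemma asserts.

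There is essentially no obstacle to overcome here: the argument reduces to the elementary observation that multiplying by $\rho_B < 1$ shrinks $\rho_A$ geometrically while the other factor is preserved. The only care needed is to confirm that the case 1 update rule does not alter $\rho_B$, which is immediate from the algorithm's definition, and to invoke the fact that the algorithm has no termination branch inside case 1, so the only way to leave a run of case 1 steps is through case 2.
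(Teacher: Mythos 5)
Your proof is correct and takes essentially the same route as the paper's: both track $\rho_A$ across a run of consecutive case 1 steps via $\rho_A(n_0+k)=\rho_A(n_0)\,\rho_B(n_0)^k$ and conclude that, since $\rho_B(n_0)<1$, the product $\rho_A\rho_B$ eventually drops below $1$, contradicting the case 1 condition. The only (harmless) imprecision is writing the run-invariant value of $\rho_B$ as the original $\rho_B$ rather than $\rho_B(n_0)$; if a case 2(c) step preceded the run these may differ, but the algorithm's inductive bookkeeping still gives $\rho_B(n_0)<1$, which is all the argument needs.
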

\begin{proof}
We suppose the $n$th step of modified Rauzy induction lands in case 1. Then, for the $k$th subsequent consecutive step which lands in case 1, 
\[\rho_A(n+k) = \rho_A(n + k - 1)\rho_B(n + k - 1) = \cdots = \rho_A(n)\rho_B(n)^k.\]
Since $\rho_B(n) < 1$, if every step after the $n$th lands in case 1, then eventually $\rho(n + k) < 1$ which contradicts the condition to be in case 1. Hence, there must exist some $(n+k)$th step for $k \geq 1$ that lands in case 2. 
\end{proof}

\begin{proposition}
\label{prop:expanding}
If $\rho_A \geq 1$ and $\rho_B < 1$ or $\rho_A < 1$ and $\rho_B \geq 1$, then there exists a measure zero Cantor set $C \subset [0,1]$ of break point locations $x_T \in [0,1]$ such that every $(\rho_A, \rho_B)$ map with break point in $[0,1] \backslash C$ has flow attracted to a periodic or critical orbit. 
\end{proposition}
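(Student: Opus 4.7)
The plan is to apply the modified Rauzy algorithm defined just above and reduce the expanding case to Proposition \ref{prop:12}. Without loss of generality, I would assume $\rho_A \geq 1$ and $\rho_B < 1$; the symmetric case follows by reflecting the AIET across the center of the domain (swapping the roles of $A$ and $B$). The valid (injective) parameter range is $x_T \in [0, (1-\rho_B)/(\rho_A - \rho_B))$, and the Cantor set $C$ will live inside this range.

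The first step is a structural observation: by Lemma \ref{lem:case1}, no execution of the algorithm can remain in case 1 indefinitely, so its run breaks into blocks each ending at a case 2 step (of type 2a, 2b, or 2c). Within the expanding regime (cases 1 and 2c), the expansion factor is non-increasing, since case 1 multiplies $\rho_A(n)$ by $\rho_B(n)<1$ while case 2c leaves it unchanged. Combining this with the observation recorded before the statement of Lemma \ref{lem:case1} that the case 2c subinterval occupies at most a proportion $\rho_A(n)/(1+\rho_A(n))$ of the current valid parameter space, I obtain a uniform upper bound $\alpha := \rho_A/(1+\rho_A)<1$ for this proportion at every step. Because each Rauzy step acts piecewise affinely on parameters, these proportions pull back without distortion to the original parameter $x_T$. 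Letting $V_n$ be the set of original break points whose algorithm reaches an $n$-th case 2 step each time via case 2c, this gives $|V_{n+1}| \leq \alpha |V_n|$, and hence $V_\infty := \bigcap_n V_n$ has measure zero.

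Every $x_T$ outside $V_\infty$ either terminates at a case 2b step (producing an attracting periodic or critical orbit directly) or eventually enters the contracting regime via a case 2a step. In the latter case the resulting map lies in $\mathcal{I}(\rho_A',\rho_B')$ with $\rho_A',\rho_B'<1$, so Proposition \ref{prop:12} produces a measure zero Cantor set of bad break points inside the new parameter space; pulling this back affinely gives a measure zero closed nowhere-dense set inside the corresponding case 2a subinterval of the original parameter space. Taking the union over the countably many case 2a histories together with $V_\infty$ yields the desired bad set $C$ of measure zero. It is closed, totally disconnected, and perfect for the same reasons as in Proposition \ref{prop:12}: at each stage one removes open terminating subintervals, the complements of the pulled-back Cantor sets inside each contracting subinterval are also open, and endpoints of removed intervals accumulate to give perfectness. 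Thus $C$ is a Cantor set, and every $x_T \notin C$ yields an attracting periodic or critical orbit in the final AIET of the Rauzy sequence, which suspends back through the algorithm to one for the original map.

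The main obstacle I expect is the uniform contraction ratio $\alpha$: the monotonicity $\rho_A(n)\leq\rho_A$ throughout the expanding regime is the keystone that makes the geometric shrinkage $|V_n|\leq \alpha^n|V_0|$ work, and establishing it requires carefully distinguishing which case each step of the modified algorithm falls into and confirming that $\rho_A$ is only altered by left inductions. A secondary technical nuisance is ensuring that the union of the pulled-back Cantor sets with $V_\infty$ is genuinely closed and perfect rather than merely of measure zero, which will require checking that the complementary open intervals removed at successive stages fit together in the standard Cantor-dust pattern.
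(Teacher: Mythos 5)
Your proposal is correct and follows essentially the same route as the paper: apply the modified Rauzy algorithm, use Lemma \ref{lem:case1} to reduce to the case 2 steps, split the parameter interval at each such step into the 2(a)/2(b)/2(c) pieces (invoking Proposition \ref{prop:12} on the contracting 2(a) piece and the bound $\rho_A(n)/(1+\rho_A(n)) \leq \rho_A/(1+\rho_A)$ to kill the measure of parameters that stay in 2(c) forever), and assemble the bad set as the union of the pulled-back Cantor sets with the residual set. The only cosmetic difference is that the paper identifies the residual set of parameters hitting 2(c) infinitely often as the single right endpoint, while you keep it as a measure-zero set $V_\infty$, which suffices.
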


\begin{proof}
We first consider the $\rho_A \geq 1$ and $\rho_B < 1$ case. By symmetry, the $\rho_A < 1$ and $\rho_B \geq 1$ case will follow similarly.

We wish to find a measure zero Cantor set $C$ such that every $(\rho_A, \rho_B)$-map with break point $x_T \in [0,1]\backslash C$ has an attracting periodic or critical orbit. 

We let the sequence of maps under the modified Rauzy induction algorithm be $T_n \in \mathcal{I}(\rho_A(n), \rho_B(n))$ and we will adopt the convention that we do not rescale the intervals on which $T_n$ are defined during induction. So each $T_n : I_n \rightarrow I_n$ and each $I_n \subset I_{n-1}$. We notice then that the location of the break point $x_T(n)$ of the map $T_n$ is an affine function of $x_T(n-1)$ that depends on what case the previous step was in. 

The algorithm never terminates in case 1 and by Lemma \ref{lem:case1}, every case 1 step is eventually followed by a case 2 step. We thus can restrict our attention to the case 2 steps. 

We let $n_1 \geq 1$ be the first time that we perform a case 2 Rauzy induction. That is, $T_{n_1 - 1}$ has that $\rho_A(n_1-1)\rho_B(n_1 -1) < 1$. Then, the possible $x_T(n_1-1)$ values are in a bijective affine coorespondence with the $x_T$ values of our original map $T_0$. Upon rescaling intervals to have length $1$, the $x_T(n_1-1)$ and $x_T$ are in a smooth bijective correspondence. In particular, Cantor sets of $x_T$ values correspond to Cantor sets of $x_T(n_1-1)$ values and vice versa. 

We have already observed that the interval of possible $x_T(n_1-1)$ values in a case 2 step breaks up into three intervals corresponding to case 2(a), 2(b), and 2(c) in that order from left to right. On the 2(a) interval, by Proposition \ref{prop:12}, we have a measure zero Cantor set of parameters on whose complement the map $T_0$ has an attracting periodic or critical orbit since $\rho_A(n_1) < 1$ and $\rho_B(n_1) < 1$. $T_0$ has an attracting periodic or critical orbit on all of 2(b). On 2(c), we must continue the Rauzy algorithm. Thus, our original interval $[0,1]$ of $x_T$ values also breaks up into three intervals $I_1^1, I_2^1, I_3^1$ from left to right with the same behaviors. We let $C_1 \subset I_1^1$ be the measure zero Cantor set in the first interval given by Proposition \ref{prop:12}.

We can then repeat this process, focusing now on the interval $I_3^1 \subset [0,1]$ and note that if step $n_2$ is the next time that we are in case 2, then at that step $I_3^1$ will further break into three intervals $I_1^2, I_2^2, I_3^2$. We again have a Cantor set $C_2 \subset I_1^2$ such that on the $I_1^2 \backslash C_2$ and on $I_2^2$, $T_0$ has an attracting periodic or critical orbit. We repeat this process inductively with our third intervals $I_3^k$. 

We recall our observation that if the $n$th step is in case 2, then the proportion of the set of parameters that lands in case 2(c) is at most $\frac{\rho_A(n)}{1 + \rho_A(n)}$. Each $\rho_A(n) \leq \rho_A$, so the set $S$ of parameters $x_T \in [0,1]$ that reach case 2(c) infinitely often has size at most 
\[
    |S| \leq \prod_{j = 1}^\infty \frac{\rho_A(n_j)}{1 + \rho_A(n_j)} \leq \prod_{j = 1}^\infty \left( \frac{\rho_A}{1 + \rho_A} \right) = 0.
\]
Thus, this set $S$ of parameters $x_T \in [0,1]$ is just the right endpoint $x_T = 1$. We let $C = \left(\bigcup_{k=1}^\infty C_k\right) \cup \{1\}.$ This is the countable union of measure zero Cantor sets and one point that is a limit point of these Cantor sets. Thus, $C$ is also a Cantor set.

We have seen that every $(\rho_A, \rho_B)$-map with break point in $[0,1] \backslash C$ ends in a terminating Rauzy induction step after finitely finitely steps and so has flow attracted to a periodic or critical orbit. 
\end{proof}

We now have all of the tools that we need to prove the main theorem of this section.

\begin{proof}[Proof of Theorem \ref{thm:1}]
By Proposition \ref{prop:13}, there is a Cantor set $C_3$ in the interval $I_3$ of directions introduced in Section \ref{sec:flowStructure} off of which our $(\rho_A, \rho_B)$-map has an attracting periodic or critical orbit. This corresponds to an attracting periodic cycle or saddle connection for the directional flow on our one-holed dilation torus. On the intervals $I_1$ and $I_5$, Proposition \ref{prop:expanding} gives us Cantor sets $C_1 \subset I_1$ and $C_5 \subset I_5$ off of which the corresponding $(\rho_A, \rho_B)$-maps again have  an attracting periodic or critical orbit. This implies that the straight-line flow on $X$ in these directions accumulates on a periodic orbit or a saddle connection. 

We also saw in Section \ref{sec:I2directions} that the straight line flow in the whole interval of directions $I_2$ and $I_4$ has an attracting periodic orbit or saddle connection. Thus, letting $C = C_1 \cup C_3 \cup C_5$, which is a union of three disjoint Cantor sets and is thus a Cantor set, we see that in every direction of $\mathbb{RP}^1 \backslash C$, the straight line flow on $X$ accumulates to a periodic orbit or saddle connection. 
\end{proof}

\section{Dynamics of maps in $C$}
\label{sec:cantorMaps}

We have determined a Cantor set of directions outside of which the dynamics are already understood.  In this section, the goal is to prove the following theorem: 

\dynamics

The $m_B$ direction was handled in Section \ref{sec:boundaryParallelDirection}. For the other directions, it suffices to prove an analogous statement for the first return map to a transversal. This reduces to understanding the dynamics of $(\rho_A, \rho_B)$-maps. 

We will continue with the assumption that $\rho_A \geq \rho_B$ and that $\rho_B < 1$.  
Let $T : [0, 1] \to [0,1]$ be a $(\rho_A, \rho_B)$-map whose discontinuity point $x_T$ is in this Cantor set of parameters $C$ where we did not yet find an attractive periodic orbit. Then the Rauzy algorithm does not terminate on $T$. Except in countably many cases, we claim that there is a measure zero Cantor set  in $[0, 1]$ which is the accumulation set of every orbit of $T$.

We will prove this claim with the following steps:
\begin{enumerate}
    \item We handle the dynamics in the countably many exception cases. This is the set $C_1$ of Theorem \ref{thm:2}
    \item We define a sequence of maps $f_n$ sending the discontinuity point $x_T$ to the measures of sets in $[0,1]$ which are disjoint from accumulation sets of orbits of $T$. 
    \item We show that $f_n$ are continuous and nonincreasing, and that for $x_T$ in a dense set, $f_n(x_T) \xrightarrow{n\to\infty} 1$.  This will prove that $f_n(x_T) \xrightarrow{n\to\infty} 1$ for all $x_T$.  Then we've shown that for any $(\rho_A, \rho_B)$-map $T$, the $\omega$-limit set $\omega(x)$ is measure zero for any $x \in [0,1]$.  
    \item We show that for directions in $C_2$, the limit set $\omega(x)$ is in fact a Cantor set that is independent of $x$.
\end{enumerate}

\subsection{Dynamics in the $C_1$ directions} 

Let us begin with the countably many exception cases in Step 1, the set of direction $C_1$. These will correspond to $(\rho_A, \rho_B)$-maps for which the Rauzy algorithm does not terminate but only performs finitely many left or finitely many right Rauzy induction steps. 

\begin{proposition}
\label{prop:C1}
If the Rauzy induction algorithm on a $(\rho_A, \rho_B)$-map $T: [0,1] \rightarrow [0,1]$ does not terminate but only performs finitely many left or finitely many right Rauzy induction steps, 
then $T$ is a degenerate $(\rho_A, \rho_B)$-map such that every orbit is attracted to a terminating orbit 
(the orbit of a point that eventually hits an endpoint of $A$ or $B$).  
\end{proposition}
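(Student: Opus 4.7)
The plan is to show that the hypothesis of the proposition forces $T$ to possess a specific ``terminating orbit'' — a forward $T$-orbit of $x_T$ that lands on an endpoint of $A$ or $B$ in finitely many iterations — and to then exploit the contracting nature of the Rauzy renormalization to conclude that every other orbit accumulates on this terminating orbit. Without loss of generality I will assume that only finitely many left Rauzy inductions are performed; the symmetric case of finitely many right inductions is handled identically by interchanging the roles of $A$ and $B$. Let $T^*$ denote the renormalized map after this finite L-prefix, with constant $A$-slope $\tilde\rho := \rho_A^{m_A}\rho_B^{m_B}$; note that $(m_A, m_B)$ is preserved by every subsequent R step, by the recurrence in Section \ref{sec:pApBInduction}.

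First I would track the break points $x_n$ of the $n$-th rescaled $(\tilde\rho,\cdot)$-map during the R-only tail. A direct calculation using the matrices $R_{m_A, m_B}$ yields the M\"obius recurrence
\[
x_{n+1} \;=\; 1 - \frac{1 - x_n}{\tilde\rho\, x_n},
\]
valid exactly when $x_n > 1/(1+\tilde\rho)$ (the R-validity condition). Its two fixed points are $1$ and $1/\tilde\rho$, with $f'(1) = 1/\tilde\rho$ and $f'(1/\tilde\rho) = \tilde\rho$. I would then split into two sub-cases: if $\tilde\rho < 1$, the only fixed point in $[0,1]$ is $x=1$ (repelling), so iterates from any $x_0 < 1$ strictly decrease and must exit $(1/(1+\tilde\rho), 1)$ in finitely many steps; if $\tilde\rho > 1$, the fixed point $x=1$ is attracting and $1/\tilde\rho$ is repelling, so iterates from any $x_0 \in (1/\tilde\rho, 1)$ strictly increase to $1$. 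In either case an infinite R-tail forces $x_n \to 1$, i.e., the $B^{(n)}$-interval of the renormalized maps must collapse asymptotically.

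To translate this back, I would use that the break point of each $T^{(n)}$ in unrescaled coordinates is, by construction, an image or preimage of the original break point $x_T$ under finitely many iterations of $T$, combined with the fact that the Rauzy parameter intervals $I(w)$ of Proposition \ref{prop:9} nest down to a single parameter when $w$ is an infinite word. These two facts together force the limit $x_n \to 1$ to be realized at a \emph{finite} Rauzy step: some iterate of $x_T$ under $T$ must coincide exactly with an endpoint of $A^{(n)}$ or $B^{(n)}$, producing the terminating orbit. Attraction of every other orbit then follows because each R step replaces $T$ by a first-return map on a strict subinterval whose derivative is a product of $\tilde\rho$ and (for $\tilde\rho \geq 1$, also) powers of $\rho_B < 1$, so the nested domains have lengths $|D^{(n)}| \to 0$ and every non-terminating orbit is eventually funneled into arbitrarily small neighborhoods of the terminating orbit. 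The hardest step will be this last translation — promoting the asymptotic statement $x_n \to 1$ to an exact finite-time endpoint hit — which will require exploiting the rigidity of the Rauzy algorithm together with the uniqueness of the parameter $x_T$ picked out by an infinite Rauzy word.
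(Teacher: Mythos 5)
Your M\"obius recurrence $x_{n+1} = 1 - \frac{1-x_n}{\tilde\rho\,x_n}$ and its fixed-point analysis are correct as far as they go, but the argument omits the injectivity constraint on the renormalized maps, and this omission is fatal in two ways. First, in the subcase $\tilde\rho > 1$ you conclude that every $x_0 \in (1/\tilde\rho, 1)$ gives an infinite R-tail with $x_n \to 1$; but a rescaled map with $A$-factor $\tilde\rho$, $B$-factor $\rho_2 > 0$ and break point $x$ is injective only when $\tilde\rho x + \rho_2(1-x) \le 1$, which forces $x < 1/\tilde\rho$ whenever $x < 1$, so that whole interval of parameters is spurious --- indeed, if it were genuine it would contradict the very proposition you are proving, since those would be non-degenerate maps undergoing an infinite one-sided tail. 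Second, and more importantly, once injectivity is imposed the asymptotic statement ``$x_n \to 1$'' is the wrong target. Summing the lengths cut off by successive R steps (a geometric series with ratio $\tilde\rho^{-1}$) against the room available shows that an infinite R-tail requires $\lambda_B\bigl(1 + \tilde\rho^{-1} + \tilde\rho^{-2} + \cdots\bigr)$ to fit, which for $\tilde\rho \le 1$ forces $\lambda_B = 0$ and for $\tilde\rho > 1$ forces $\tilde\rho x \ge 1$, incompatible with injectivity unless $\lambda_B = 0$. So the degeneracy holds \emph{exactly}, at the finite step where the one-sided tail begins; there is nothing asymptotic to promote. The step you yourself flag as hardest --- converting $x_n \to 1$ into an exact finite-time endpoint hit via ``rigidity'' and the nesting of the parameter intervals $I(w)$ of Proposition \ref{prop:9} --- is left unproven, and the appeal to $I(w)$ shrinking to a point concerns the parameter space of maps as $x_T$ varies, not the orbit structure of a fixed map, so it cannot by itself produce the required coincidence. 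This is precisely where the paper instead runs the direct geometric-series-plus-injectivity computation (done there for the all-left tail, with the two degenerate outcomes $\lambda_A = 0$ or $\rho_B\lambda_B = 1$).

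The closing attraction argument also fails as stated: you claim the nested Rauzy domains satisfy $|D^{(n)}| \to 0$, but in an R-only tail the domain lengths decrease to $1 - \lambda_B\sum_{k\ge 1}\tilde\rho^{-k}$, which is positive in general, and in the degenerate case that actually occurs ($\lambda_B = 0$) the domains do not shrink at all; so non-terminating orbits are not ``funneled'' by domain shrinkage. What is needed, and what the paper does, is to note that the degenerate configurations consist of essentially a single affine branch (one interval collapsed, or the image of one interval filling the whole domain), for which every orbit is visibly attracted to an endpoint of the interval, and then to transport this attraction back through the finitely many genuine induction steps --- since each renormalized map is a first-return map of the original $T$ --- to conclude that every orbit of $T$ converges to a terminating orbit.
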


\begin{proof}
Let us consider the case of finitely many right Rauzy inductions. 
Then, after finitely many steps of Rauzy induction, we have some $(\rho_A, \rho_B)$-map $T$ such that all future Rauzy induction steps are left inductions. 
After rescaling, we may assume that $T: [0,1] \rightarrow [0,1]$ and the interval is divided into intervals $A$ and $B$ of lengths $\lambda_A$ and $\lambda_B = 1 - \lambda_A$ at the point $x_T \in [0,1]$.

\begin{figure}[ht]
\begin{center}
\begin{tikzpicture}[scale = 1.5]
  \coordinate (A) at (0, 1.5);
  \coordinate (B) at (2, 1.5);
  \coordinate (C) at (7, 1.5);
  \coordinate (D) at (0, 0);
  \coordinate (E) at (6, 0);
  \coordinate (F) at (6.5, 0);
  \coordinate (G) at (7, 0);
  \coordinate (H) at (3.5, 1.5);
  \coordinate (I) at (4.3, 1.5);

    \draw[dashed] (2,2.5) -- (2,-1); 
    \draw[dashed] (3.5,2.5) -- (3.5,-1); 
    \draw[dashed] (4.3,2.5) -- (4.3,-1); 
    
    \node at (2.75, 2.25) {$A'$}; 
    \node at (5.25, 2.25) {$B'$};
    \node at (3.9,-.75) {$T'(B')$};
    \node at (6.75,-.75) {$T'(A')$};
    \draw[very thick, color = blue] (2,2) -- (3.5, 2); 
    \draw[very thick, color = red] (3.5,2) -- (7, 2);
    
    \filldraw [color = black] (3.5,2) circle (2pt);
 
    \draw[very thick, color = red] (2,-.5) -- (6, -.5); 
    \draw[very thick, color = blue] (6.5,-.5) -- (7, -.5);   
    
    \node at (3.9, 2.75) {$A''$}; 
    \node at (5.65, 2.75) {$B''$};
    \node at (4.75,-1.25) {$T''(B'')$};
    \node at (6.75,-1.25) {$T''(A'')$};
    
    \draw[very thick, color = blue] (3.5,2.5) -- (4.3, 2.5); 
    \draw[very thick, color = red] (4.3,2.5) -- (7, 2.5);
    
    \filldraw [color = black] (4.3,2.5) circle (2pt);
    
    \draw[very thick, color = red] (3.5,-1) -- (6, -1); 
    \draw[very thick, color = blue] (6.5,-1) -- (7, -1);

  \draw [very thick, color = blue] (A) -- (B) 
    node[pos = 0.5, above, color = black]{$A$} 
    node[pos = 0.5, below, color = black]{$\lambda_A$}; 
  \draw [very thick, color = red] (B) -- (C)
    node[pos = 0.5, above, color = black]{$B$};
    \draw [very thick, color = red] (B) -- (H) node[pos = 0.5, below, color = black]{$\rho_B^{-1} \lambda_A$};
     \draw [very thick, color = red] (H) -- (I) node[pos = 0.5, below, color = black]{$\rho_B^{-2} \lambda_A$};
  \draw [very thick, color = blue] (F) -- (G) 
    node[pos = 0.5, below, color = black]{$T(A)$};
  \draw [very thick, color = red] (D) -- (E)
    node[pos = 0.5, below, color = black]{$T(B)$}
    node[pos = 0.5, above, color = black]{$\rho_B \lambda_B$};
  
  \filldraw [color = black] (B) circle (2pt) node[above]{$x_T$};
  
\end{tikzpicture}
\caption{A series of left inductions on a $(\rho_A, \rho_B)$-map with discontinuity point $x_T$.}
\label{fig:leftinduction}
\end{center}
\end{figure}
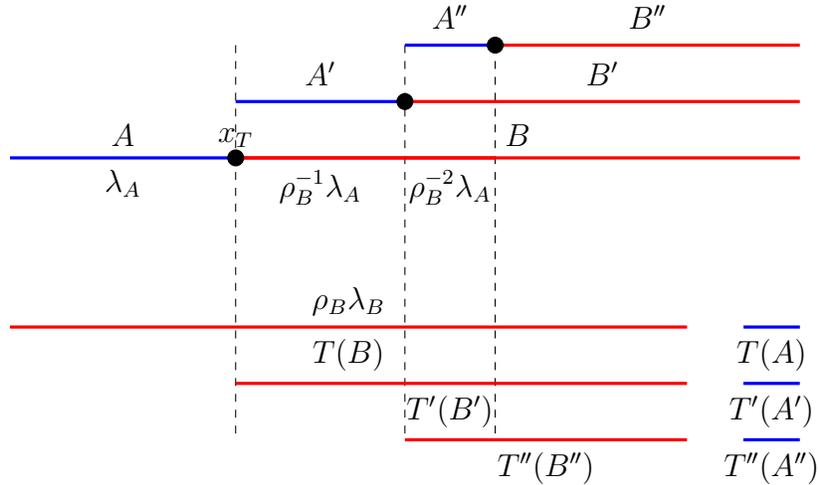

Referring to Figure \ref{fig:leftinduction}, we see that to have an infinite series of left inductions, we always need the breakpoint $x_T$ after each iteration of Rauzy induction to be to the left of the right endpoint of the image of the $B$ interval. 
In our figure, we let $T$ be the initial $(\rho_A, \rho_B)$ with intervals $A$ and $B$. 
Then $T'$ is the map after one iteration of Rauzy induction with intervals $A'$ and $B'$, and $T''$ with intervals $A''$ and $B''$ is the map after two iterations of Rauzy induction. 

We see then that if we do not rescale the maps, the right endpoint of the $B$ (or $B'$, or $B''$, etc.) interval is always at the point $\rho_B\lambda_B$. 
So our condition for infinitely many left inductions is that $\lambda_A + \rho_B^{-1}\lambda_A + \rho_B^{-2} \lambda_A + \ldots \leq \rho_B\lambda_B.$ 
Rewriting, we have that $$\lambda_A (1+ \rho_B^{-1}+ \rho_B^{-2}  + \ldots) \leq \rho_B\lambda_B.$$

The right hand side $\rho_B \lambda_B \leq 1$. When $\rho_B \leq 1$ the left hand side is infinite unless $\lambda_A = 0$. 
When $\rho_B > 1$, we can rewrite the inequality at $\frac{\lambda_A}{1- \rho_B^{-1}} \leq \rho_B \lambda_B$ and rearranging with $\lambda_A = (1-\lambda_B)$ gives us that $1 \leq \rho_B\lambda_B$. 
In this case, we must have that $\rho_B \lambda_B = 1$. 
Pictures of these two cases are show in \ref{fig:infiniteleft}. 

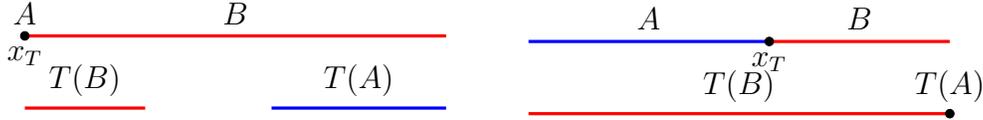
\begin{figure}[ht]
\begin{center}
\begin{tikzpicture}[scale=0.8]
  \coordinate (A) at (0, 1.2);
  \coordinate (B) at (0, 1.2);
  \coordinate (C) at (7, 1.2);
  \coordinate (D) at (0, 0);
  \coordinate (E) at (2, 0);
  \coordinate (F) at (4.1, 0);
  \coordinate (G) at (7, 0);

  \draw [very thick, color = blue] (A) -- (B) 
    node[pos = 0.5, above, color = black]{$A$};
  \draw [very thick, color = red] (B) -- (C) 
    node[pos = 0.5, above, color = black]{$B$};
  \draw [very thick, color = blue] (F) -- (G) 
    node[pos = 0.5, above, color = black]{$T(A)$};
  \draw [very thick, color = red] (D) -- (E) 
    node[pos = 0.5, above, color = black]{$T(B)$};
  
  \filldraw [color = black] (B) circle (2pt) node[below]{$x_T$};
  
  \node at (8,0) {}; 
\end{tikzpicture}
\begin{tikzpicture}[scale=0.8]
  \coordinate (A) at (0, 1.2);
  \coordinate (B) at (4, 1.2);
  \coordinate (C) at (7, 1.2);
  \coordinate (D) at (0, 0);
  \coordinate (E) at (7, 0);
  \coordinate (F) at (7, 0);
  \coordinate (G) at (7, 0);

  \draw [very thick, color = blue] (A) -- (B) 
    node[pos = 0.5, above, color = black]{$A$};
  \draw [very thick, color = red] (B) -- (C) 
    node[pos = 0.5, above, color = black]{$B$};
  \draw [very thick, color = blue] (F) -- (G) 
    node[pos = 0.5, above, color = black]{$T(A)$};
  \draw [very thick, color = red] (D) -- (E) 
    node[pos = 0.5, above, color = black]{$T(B)$};
  
  \filldraw [color = black] (B) circle (2pt) node[below]{$x_T$};
  \filldraw [color = black] (F) circle (2pt);
\end{tikzpicture}
\caption{The two cases of $(\rho_A, \rho_B)$-map with an infinite sequence of only left inductions.}
\label{fig:infiniteleft}
\end{center}
\end{figure}

We can see that each of these is a degenerate $(\rho_A, \rho_B)$-map where some interval has collapsed to the zero interval. If we were to analyze the dynamics of these maps anyways, we see that in both cases every orbit is attracted to the left or right endpoint of the interval. The case of only right inductions can be similarly concluded. 

If we had started with a $(\rho_A, \rho_B)$-map that eventually ends in an infinitely string of left or right inductions, then every orbit following the Rauzy induction algorithm backward from the point of the only left or only right cases shows that every orbit of our original map converges on a terminating orbit. 
\end{proof}

We make a note here that how we should think about the case of the Rauzy algorithm ending in a string of infinite left or infinite right inductions is that the algorithm really should have terminated in the previous step with $x_T$ equaling one of the endpoints of $T(A)$ or $T(B)$. This is similar to the phenomenon of ternary expansions of endpoints of the middle thirds Cantor set having both an infinte and finite representation (e.g. $0.0\overline{2}_3 = 0.1_3$).

\subsection{Dynamics in the $C_2$ directions}

We now need to deal with the situation in which $T$ undergoes infinitely many left and right inductions, the directions falling in $C_2$.  We want to identify points which cannot possibly be limit points of any orbit in order to define the functions described in Step 2 of the proof outline.  No point in the interval $I_\infty := D \setminus T(D) = (T(1), T(0))$, the complement of the image of $T$, is a limit point of any orbit.  See Figure \ref{fig:Iinfty}.

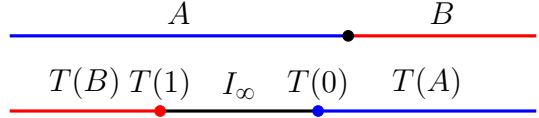
\begin{figure}[ht]
\begin{center}
\begin{tikzpicture}
    \draw [very thick, color = blue] (0, 1) -- (4.5, 1) node[pos = 0.5, above, color = black]{$A$};
    \draw [very thick, color = red] (4.5, 1) -- (7, 1) node[pos = 0.5, above, color = black]{$B$};
    \draw [very thick, color = blue] (4.1, 0) -- (7, 0) node[pos = 0.5, above, color = black]{$T(A)$};
    \draw [very thick, color = red] (0, 0) -- (2, 0) node[pos = 0.5, above, color = black]{$T(B)$};
    \draw [very thick] (2, 0) -- (4.1, 0) node[pos = 0.5, above, color = black]{$I_\infty$};
    
    \draw[fill] (4.5, 1) circle (2pt);
    \filldraw[color = blue] (4.1, 0) circle (2pt) node[above, black]{$T(0)$};
    \filldraw[color = red] (2, 0) circle (2pt) node[above, black]{$T(1)$};
\end{tikzpicture}
\caption{The interval $I_\infty = D \setminus T(D)$}
\label{fig:Iinfty}
\end{center}
\end{figure}

Similarly, for any $n$, no point in the interior of $T^n(I_\infty)$ is a limit point of any orbit.  
Indeed, for any $x$, $T^j(x)$ will not fall in $T^n(I_\infty)$ whenever $j > n$ because $T^{n-j}(I_\infty)$ is empty.  
Then $\bigcup_{k = 1}^\infty T^k(I_\infty)$ is disjoint from $\omega(x)$ for any $x$.  
For similar reasons, $T^m(I_\infty) \cap T^n(I_\infty) = \emptyset$ whenever $m \neq n$.  
We claim the disjoint union $\bigcup_{k}T^k(I_\infty)$ is full measure.

Recall that the discontinuity point $x_T$ parameterizes the set of $(\rho_A, \rho_B)$-maps, 
where $x_T$ lies in some interval $J = \left[0, \min\left\{1, \frac{1 - \rho_B}{\rho_A - \rho_B}\right\}\right]$ (if $\rho_A = \rho_B$, just let $J = [0, 1]$).  
Consider functions $f_n : J \to [0, 1]$ which map $x_T$ to the quantity $\left|\bigcup_{k = 0}^n T^k(I_\infty)\right| = \sum_{k = 0}^n |T^k(I_\infty)|$ (where $T$ is the map corresponding to $x_T$).  
Here is the rather technical but important proposition.

\begin{proposition}
\label{prop:nonincreasing}
If $\rho_A \geq \rho_B$, then for each $n$, the function $f_n : J \to [0, 1]$ mapping $x_T$ to $\left|\bigcup_{k = 0}^n T^k(I_\infty)\right|$ is continuous and nonincreasing.
\end{proposition}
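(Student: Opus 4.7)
The plan is to recast the problem via the disjoint decomposition
\[
    D \;=\; T^{n+1}(D) \;\sqcup\; I_\infty \;\sqcup\; T(I_\infty) \;\sqcup\; \cdots \;\sqcup\; T^n(I_\infty),
\]
which follows by induction from $D = T(D) \sqcup I_\infty$; the terms $T^k(I_\infty)$ are pairwise disjoint because injectivity of $T$ gives $T^k(I_\infty) \subseteq T(D)$ for $k \geq 1$, whereas $I_\infty \cap T(D) = \emptyset$. Taking Lebesgue measure yields
\[
    f_n(x_T) \;=\; 1 - g_n(x_T), \qquad g_n(x_T) := |T^{n+1}(D)|,
\]
so it suffices to show that $g_n$ is continuous and nondecreasing on $J$.

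For continuity, I would argue that $T^{n+1}(D)$ is a finite disjoint union of subintervals of $D$ whose endpoints are obtained by iteratively applying $T$ to the three distinguished points $0$, $1$, and $x_T$. Since the slopes $\rho_A, \rho_B$ are held fixed while only the discontinuity location varies with the parameter, each such endpoint is a piecewise affine function of $x_T$, with break-points occurring only when such an iterate crosses $x_T$. Hence $g_n$ is piecewise affine and in particular continuous.

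For monotonicity, I would induct on $n$. The base case $g_0(x_T) = \rho_B + (\rho_A - \rho_B)x_T$ is nondecreasing since $\rho_A \geq \rho_B$. For the inductive step, set $\alpha_n(x_T) := |T^n(D) \cap A|$ and $\beta_n(x_T) := |T^n(D) \cap B|$, so that $\alpha_n + \beta_n = g_{n-1}$ and, wherever the derivatives exist,
\[
    g_n'(x_T) \;=\; \rho_A\, \alpha_n'(x_T) + \rho_B\, \beta_n'(x_T).
\]
The inductive hypothesis yields $\alpha_n' + \beta_n' = g_{n-1}' \geq 0$, so at least one of $\alpha_n', \beta_n'$ is non-negative. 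If both are non-negative, $g_n' \geq 0$ is immediate; if $\alpha_n' \geq 0 > \beta_n'$, then $\alpha_n' \geq |\beta_n'|$ forces $g_n' \geq (\rho_A - \rho_B)|\beta_n'| \geq 0$.

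The hard part is the remaining sub-case $\alpha_n' < 0 \leq \beta_n'$, where the naive bound $\beta_n' \geq |\alpha_n'|$ only gives $g_n' \geq -(\rho_A - \rho_B)|\alpha_n'|$, while what is actually needed is the strengthened inequality $\rho_B\, \beta_n' \geq \rho_A\, |\alpha_n'|$. This bound is already tight at $n = 1$ in the straddling regime, where $(\alpha_1', \beta_1') = (-\rho_B, \rho_A)$. To establish it in general, the plan is to fix a maximal open region of $J$ on which the combinatorial arrangement of the component intervals of $T^n(D)$ relative to $x_T$ is constant; on such a region $\alpha_n, \beta_n, g_n$ are all affine, and one can compute the slopes by tracking each interval endpoint as an iterated image of one of $0, 1, x_T$ and summing signed contributions. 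The expected geometric principle driving the required amplification factor $\rho_A/\rho_B$ is that mass leaves $T^n(D) \cap A$ only through a component endpoint which is an iterate of $x_T$ whose final $T$-step lands in $B$ (contributing a Jacobian factor $\rho_B$), while the matching mass entering $T^n(D) \cap B$ arrives at an endpoint whose final $T$-step lands in $A$ (contributing a Jacobian factor $\rho_A$). Combined with the continuity established above, this piecewise slope inequality would then upgrade to global monotonicity of $g_n$, and hence the claimed behavior of $f_n$, on all of $J$.
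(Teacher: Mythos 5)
Your reformulation $f_n = 1 - g_n$ with $g_n(x_T) = |T^{n+1}(D)|$ is correct and clean (the decomposition $D = T^{n+1}(D) \sqcup \bigsqcup_{k=0}^{n} T^k(I_\infty)$ is valid by injectivity), the identity $g_n = \rho_A\alpha_n + \rho_B\beta_n$ is right, and the first two sub-cases of your induction go through. But the proof has a genuine gap exactly where you flag it: in the sub-case $\alpha_n' < 0 \leq \beta_n'$ you need $\rho_B\beta_n' \geq \rho_A|\alpha_n'|$, and you never prove it. The inductive hypothesis $g_{n-1}' \geq 0$ cannot supply this amplified inequality, so the induction cannot close as structured; the ``expected geometric principle'' is an independent structural claim about the orbit combinatorics, and it is precisely where all the content lies. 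In the paper this content is handled by a direct computation: when $x_T \in T^j(I_\infty)$ the interval $I_\infty$ splits, $T^{j+2}(I_\infty)$ reattaches to $I_\infty$, and the itineraries of $0$ and $1$ then follow a cyclic pattern of period $p = j+2$ in which $\rho_i^0 = \rho_i^1$ except at two positions per cycle where they are swapped; this makes each term $\frac{d}{dx_T}T^k(0) - \frac{d}{dx_T}T^k(1)$ either zero or of the form $(\rho_B - \rho_A)\overline{\rho}_k \leq 0$. Your stated principle also ignores the fact that $A$ and $B$ themselves move with $x_T$ (a component of $T^n(D)$ straddling $x_T$ transfers mass between $\beta_n$ and $\alpha_n$ at unit rate); that term happens to be favorable, but it illustrates that the bookkeeping you defer is the whole proof, not a finishing touch.

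A secondary issue: your continuity argument asserts that the component endpoints of $T^{n+1}(D)$ are piecewise affine in $x_T$ and concludes continuity of $g_n$. But the endpoint trajectories are \emph{not} continuous in $x_T$: when an iterate of $0$, $1$, or $x_T$ crosses the discontinuity point, subsequent iterates jump (since $T$ is discontinuous at $x_T$), so ``piecewise affine'' alone does not give continuity of the measure. One must check, as the paper does at parameters where $x_T$ is an endpoint of some $T^k(I_\infty)$, that the one-sided affine formulas for the measure agree in the limit (the splitting and non-splitting expressions match there). This check is routine, but it is a step your argument currently skips rather than renders unnecessary.
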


Before providing the proof, it is informative to see the plot of such a function in Figure \ref{fig:fplot}.  The function is piecewise linear, nonincreasing, and the pieces alternate between having steep and shallow slopes.  Many pieces have slope 0.  These observations inform our proof.

\begin{figure}[ht]
\centering\includegraphics[scale = 0.8]{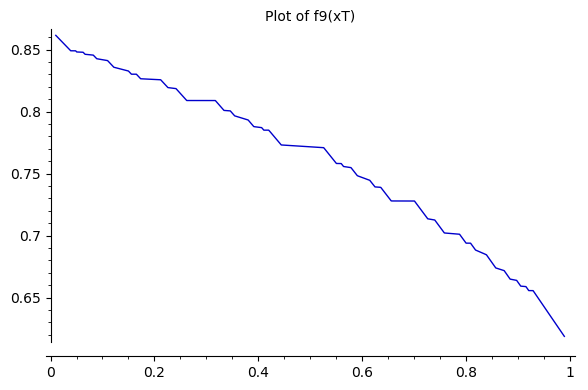}
\caption{Plot of $f_9(x_T)$ where $\rho_A = 0.9$, $\rho_B = 0.8$.  Produced in SageMath.}
\label{fig:fplot}
\end{figure}

\begin{proof}[Proof of Proposition \ref{prop:nonincreasing}]
We wish to show that $f_n : x_T \rightarrow |\bigcup_{k=0}^n T^k(I_\infty)|$ is continuous and nonincreasing. Let us first focus on points $x_T$ that are in the interior of some interval $T^j(I_\infty)$ for $0 \leq j \leq n$. The set of $x_T$ satisfying this condition is open. If $x_T \in T^j(I_\infty)$, then $I_\infty$ splits into two intervals $I_\infty^1 = (T(1), T^{-j}(x_T))$ and $I_\infty^2 = (T^{-j}(x_T), T(0))$.  
Let $w_T(y)$ be the sequence of intervals where $y \in D$ lands under $T$.
$w_T(y) =: w_T^1$ is the same for all $y \in I_\infty^1$, 
and $w_T(y) =: w_T^2$ is the same for all $y \in I_\infty^2$.  
Moreover, for $x_T'$ sufficiently close to $x_T$, the words $w_T^1$ and $w_T^2$ do not change. This is because the positions of $I_\infty^1$ and $I_\infty^2$ and their images vary continuously as $x_T$ changes, so they do not cross between $A$ and $B$ for small enough perturbations.  This shows continuity at $x_T$.

We will show that at such points $x_T$, $f_n' \leq 0$. To do so, we first make the following key observation: With $j$, $I_\infty^1$ and $I_\infty^2$ as before, we note that $T^{j+1}(I_\infty)$ consists of two intervals, one with left endpoint $0$ and the other with right endpoint $1$.  
Then $T^{j+2}(I_\infty)$ reattaches to $I_\infty$, with $T^{j+2}(I_\infty^2)$ attaching on the left of $I_\infty$ and $T^{j+2}(I_\infty^1)$ attaching on the right of $I_\infty$. 
Let $p = j + 2$, the period of this pattern.  
Further images $T^k(I_\infty^2)$ will always attach to the right of $T^{k-p}(I_\infty^2)$ and images $T^k(I_\infty^1)$ will always attach to the left of $T^{k-p}(I_\infty^1)$. 

We can see this process in Figure \ref{fig:prop14nonincreasing}, where $j=1$ and $p = 3$. 

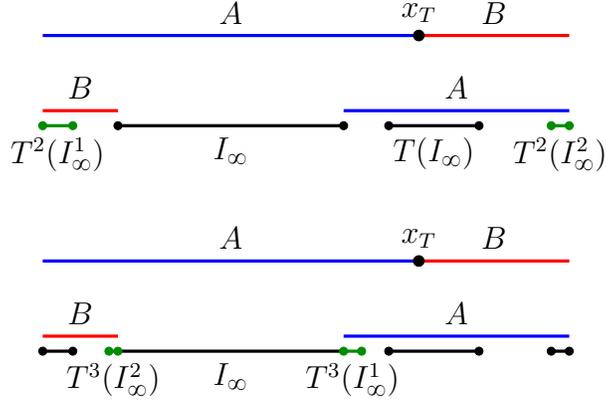
\begin{figure}[ht]
\begin{center}
\begin{tikzpicture}
    \pgftransformyshift{3cm}
    \draw [very thick, color = blue] (0, 1) -- (5, 1) node[pos = 0.5, above, color = black]{$A$};
    
    \draw node at (5,1.3) {$x_T$}; 
    
    \draw [very thick, color = red] (5, 1) -- (7, 1) node[pos = 0.5, above, color = black]{$B$};
    \draw [very thick, color = blue] (4, 0) -- (7, 0) node[pos = 0.5, above, color = black]{$A$};
    \draw [very thick, color = red] (0, 0) -- (1, 0) node[pos = 0.5, above, color = black]{$B$};
    \draw [very thick] (1, -0.2) -- (4, -0.2) node[pos = 0.5, below, color = black]{$I_\infty$};
    \draw [very thick] (4.6, -0.2) -- (5.8, -0.2) node[pos = 0.5, below, color = black]{$T(I_\infty)$};
    \draw [very thick, black!45!green] (0, -0.2) -- (0.4, -0.2) node[pos = 0.5, below, color = black]{$T^2(I^1_\infty)$};
    \draw [very thick, black!45!green] (6.76, -0.2) -- (7, -0.2) node[pos = 0.5, below, color = black]{$T^2(I^2_\infty)$};
    
    \filldraw (5, 1) circle (2pt);
    \filldraw (4, -0.2) circle (1.5pt);
    \filldraw (1, -0.2) circle (1.5pt);
    \filldraw (4.6, -0.2) circle (1.5pt);
    \filldraw (5.8, -0.2) circle (1.5pt);
    \filldraw[black!45!green] (0, -0.2) circle (1.5pt);
    \filldraw[black!45!green] (7, -0.2) circle (1.5pt);
    \filldraw[black!45!green] (0.4, -0.2) circle (1.5pt);
    \filldraw[black!45!green] (6.76, -0.2) circle (1.5pt);

    \pgftransformyshift{-3cm}
    \draw [very thick, color = blue] (0, 1) -- (5, 1) node[pos = 0.5, above, color = black]{$A$};
    
    \draw node at (5,1.3) {$x_T$}; 
    
    \draw [very thick, color = red] (5, 1) -- (7, 1) node[pos = 0.5, above, color = black]{$B$};
    \draw [very thick, color = blue] (4, 0) -- (7, 0) node[pos = 0.5, above, color = black]{$A$};
    \draw [very thick, color = red] (0, 0) -- (1, 0) node[pos = 0.5, above, color = black]{$B$};
    \draw [very thick] (1, -0.2) -- (4, -0.2) node[pos = 0.5, below, color = black]{$I_\infty$};
    \draw [very thick] (4.6, -0.2) -- (5.8, -0.2);
    \draw [very thick] (0, -0.2) -- (0.4, -0.2);
    \draw [very thick] (6.76, -0.2) -- (7, -0.2);
    \draw [very thick, black!45!green] (4, -0.2) -- (4.24, -0.2) node[pos = 0.5, below, color = black]{$T^3(I^1_\infty)$};
    \draw [very thick, black!45!green] (0.88, -0.2) -- (1, -0.2) node[pos = 0.5, below, color = black]{$T^3(I^2_\infty)$};
    
    \filldraw (5, 1) circle (2pt);
    \filldraw[black!45!green] (4, -0.2) circle (1.5pt);
    \filldraw[black!45!green] (1, -0.2) circle (1.5pt);
    \filldraw (4.6, -0.2) circle (1.5pt);
    \filldraw (5.8, -0.2) circle (1.5pt);
    \filldraw (0, -0.2) circle (1.5pt);
    \filldraw (7, -0.2) circle (1.5pt);
    \filldraw (0.4, -0.2) circle (1.5pt);
    \filldraw (6.76, -0.2) circle (1.5pt);
    \filldraw[black!45!green] (0.88, -0.2) circle (1.5pt);
    \filldraw[black!45!green] (4.24, -0.2) circle (1.5pt);
\end{tikzpicture}
\caption{Images of $I^1_\infty$ and $I^2_\infty$ split up under $T^2$, then reattach to $I_\infty$ under $T^3$.}
\label{fig:prop14nonincreasing}
\end{center}
\end{figure}

Thus, $\bigcup_{k=0}^n T^k(I_\infty)$ for $k \geq p$ consists of exactly $p+1$ disjoint intervals. Since the endpoints of $I_\infty$ are $T(0)$ and $T(1)$, we can see that the right endpoints of these $p+1$ intervals are $1, T^{n+1}(0), T^n(0), \ldots, T^{n-j}(0)$, and the left endpoints of these $p+1$ intervals are $0, T^{n+1}(0), T^n(0), \ldots, T^{n-j}(0)$. 

It follows then that $$f_n'(x_T) = \sum_{k=n-j}^{n+1} \left(\frac{d}{dx_T} (T^k(0)) - \frac{d}{dx_T}(T^k(1))\right).$$

We now make a few observations. First, we have that $T(0) = 1 - \rho_Ax_T$ and $T(1) = \rho_B(1-x_T)$. We also notice that $T^{k+1}(1) = aT^k(1) + b$, where $a = \rho_A$ or $\rho_B$ depending whether $T^k(1)$ is in the interval $A$ or $B$, and a similar statement holds for $T^{k+1}(0)$. Thus, 
\begin{align*} 
\frac{d}{dx_T} (T^k(0)) & = -\rho_A \cdot \rho_1^0 \rho_2^0 \cdots \rho_{k-1}^0 \\
\frac{d}{dx_T} (T^k(1)) & = -\rho_B \cdot \rho_1^1 \rho_2^1 \cdots \rho_{k-1}^1 
\end{align*} 

where $\rho_i^\ell$ is $\rho_A$ if $T^i(\ell) \in A$ and $\rho_B$ if $T^i(\ell) \in B$.  

We also note that $\rho_i^1$ and $\rho_i^0$ both follow a cyclic pattern of period $p$ where they are the equal except at two times in the cycle where they are opposite from one another. Specifically, for all integral $m \geq 1$, we have that
$$\begin{cases} \rho_i^0 = \rho_B, \rho_i^1 = \rho_A, & i = mp - 1 \\ \rho_i^0 = \rho_A, \rho_i^1 = \rho_B, & i = mp \\ \rho_i^0 = \rho_i^1, & \text{otherwise.} \end{cases}$$
Putting this together, we see that $\frac{d}{dx_T} (T^k(0)) - \frac{d}{dx_T}(T^k(1)) = 0$ for $k = mp-1$ and $\frac{d}{dx_T} (T^k(0)) - \frac{d}{dx_T}(T^k(1)) = (\rho_B-\rho_A) \cdot \overline{\rho}_k$ where $\overline{\rho}_k$ is a product of $\rho_A$ and $\rho_B$ terms when $k \neq mp-1$. In the latter case, $\frac{d}{dx_T} (T^k(0)) - \frac{d}{dx_T}(T^k(1)) < 0$ since $\rho_A > \rho_B$ and all $\overline{\rho}_k > 0$. Thus, $f_n'(x_T)$ is the sum of non-positive terms and is therefore non-positive. It follows that $f_n$ is continuous and non-increasing at such points $x_T$. 

We can employ a similar method to show that $f_n'(x_T) \leq 0$ for points $x_T \not \in \bigcup_{k=0}^n T^k(I_\infty)$. This case is simpler than the case we just dealt with in that the $I_\infty$ interval does not split under the first $n$ iterations of $T$. Thus, $$f_n'(x_T) = \sum_{k=1}^{n+1} \left(\frac{d}{dx_T} (T^k(0)) - \frac{d}{dx_T}(T^k(1))\right).$$ 
Again, we have that $$\frac{d}{dx_T} (T^k(0))  = -\rho_A \cdot \rho_1^0 \rho_2^0 \cdots \rho_{k-1}^0  \text{ and }
\frac{d}{dx_T} (T^k(1))  = -\rho_B \cdot \rho_1^1 \rho_2^1 \cdots \rho_{k-1}^1,$$ but now $\rho_i^0 = \rho_i^1$ for all $i$, so $\frac{d}{dx_T} (T^k(0)) - \frac{d}{dx_T}(T^k(1)) = (\rho_B - \rho_A)\overline{\rho}_k < 0$ for all $k$. Thus, $f_n'(x_T) < 0$ and $f_n$ is continuous and decreasing at these $x_T$. 

Finally, it remains to show that $f_n$ is continuous at all $x_T$ that are endpoints of some interval $T^k(I_\infty)$ for $0 \leq k \leq n$. $f_n$ can be seen to be both left-continuous and right-continuous at such points $x_T$ by a simple check that at each of these points, there is a continuous transition between the cases of $I_\infty$ splitting and not splitting as above.
\end{proof}

With this proposition, we can now show that the forward limit set of the orbit of any non-critical point of any $(\rho_A, \rho_B)$-map $T$ has measure zero.

\begin{proposition}
\label{prop:measurezero}
For every non-surjective $(\rho_A, \rho_B)$-map $T$ and any non-critical point $x \in D$, the forward limit set $\omega(x)$ has measure zero. 
\end{proposition}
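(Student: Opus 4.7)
The plan is to prove $\lim_{n\to\infty} |T^n(D)| = 0$ for every non-surjective $(\rho_A,\rho_B)$-map $T$; the proposition follows immediately, since $\{T^k(x) : k\ge n\}\subseteq T^n(D)$ gives $\omega(x)\subseteq\bigcap_n \overline{T^n(D)}$ and hence $|\omega(x)|\le\lim_n |T^n(D)|$. Moreover, by injectivity of $T$ we have $T^k(I_\infty)=T^k(D)\setminus T^{k+1}(D)$ (apply $T^{-k}$ to both sides to recover $I_\infty=D\setminus T(D)$), and since the $T^k(I_\infty)$ are pairwise disjoint the union telescopes to
\[
\bigcup_{k=0}^{n} T^k(I_\infty) \;=\; D\setminus T^{n+1}(D),
\]
so $f_n(x_T)=|D|-|T^{n+1}(D)|$. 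Thus the desired shrinkage is equivalent to $\lim_n f_n(x_T) = 1$.

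In the purely contracting case ($\rho_A<1$ and $\rho_B<1$) this is immediate and uniform in $x_T$: $|T^n(D)|\le \max(\rho_A,\rho_B)^n|D|\to 0$. The real work is the expanding case ($\rho_A\ge 1$, $\rho_B<1$), where I plan to proceed via density and monotonicity. Set $f(x_T) := \lim_n f_n(x_T)\in [0,1]$; this limit exists since $\bigcup_{k=0}^n T^k(I_\infty)$ is nondecreasing in $n$, and $f$ inherits the monotonicity of $f_n$ from Proposition~\ref{prop:nonincreasing}, so $f$ is nonincreasing in $x_T$. If I can exhibit a dense subset $S\subseteq J$ on which $f\equiv 1$, then for any $x_T\in J$ not at the right endpoint, choosing $x_m\in S$ with $x_m\searrow x_T$ yields $f(x_T)\ge f(x_m)=1$, hence $f(x_T)=1$; the right endpoint of $J$ corresponds to a surjective map, excluded by hypothesis.

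For the dense set $S$, I take the complement in $J$ of the measure-zero Cantor set produced by Proposition~\ref{prop:expanding}, i.e.\ the set of parameters at which the modified Rauzy algorithm terminates. For each $x_T\in S$, Theorem~\ref{thm:1} supplies an attracting period-two orbit $\{p_A,p_B\}$ to which every non-critical forward orbit of $T$ converges. From this attracting structure I plan to deduce $|\bigcap_n T^n(D)|=0$; downward continuity of measure on the decreasing family $T^n(D)$ then gives $|T^n(D)|\to 0$ and so $f(x_T)=1$ on $S$, completing the proof.

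The main obstacle I anticipate is making this last step rigorous. A priori the ``global attractor'' $\bigcap_n T^n(D)$ could be a Cantor-like set of positive measure, even though forward orbits collapse to a finite set. I will need to follow the modified Rauzy algorithm of Section~\ref{sec:pA>1} carefully: after finitely many steps it lands either in case~2(b) termination, where a power of the induced first-return map $T_k$ is a uniform contraction on each top interval so that $|T_k^n(D_k)|\to 0$ trivially, or in case~2(a), which reduces to a contracting sub-$(\rho_A',\rho_B')$-map already handled by the contracting case above. Translating the resulting shrinkage of the induced map back to $|T^n(D)|\to 0$ for the original $T$ will require a first-return/suspension bookkeeping that tracks how the subintervals removed during Rauzy induction fit inside the nested images $T^n(D)$.
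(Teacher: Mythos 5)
Your proposal follows essentially the same route as the paper: rewrite $f_n(x_T)=1-|T^{n+1}(D)|$, then combine the continuity and monotonicity of $f_n$ from Proposition~\ref{prop:nonincreasing} with the density of terminating parameters (where Theorem~\ref{thm:1} supplies an attracting periodic or critical orbit, hence $f_n\to 1$), excluding only the surjective right endpoint of $J$. The step you flag as the main obstacle --- that $|T^n(D)|\to 0$ for terminating parameters --- is precisely the point the paper also treats briefly (``a simple calculation''), so aside from the minor misstatement that the attracting orbit has period two (its period is dictated by the Rauzy word and can be arbitrarily long), your argument matches the paper's proof.
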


\begin{proof}
Whenever $x_T \in J \setminus C$, as long as $x_T < \frac{1 - \rho_B}{\rho_A - \rho_B}$, 
we find that $f_n(x_T) \xrightarrow{n \to \infty} 1$. This is a simple calculation due to the result of Theorem \ref{thm:1} which tells us that these maps have an attracting periodic or critical orbit.



We first note that when $\rho_A \geq 1$, $f_n\left(\frac{1 - \rho_B}{\rho_A - \rho_B}\right) = 0$ for all $n$ since $|I_\infty| = 0$. This is the surjective case.  Now we consider any other value of $x_T$. The set $J \setminus C$ is dense, so for any $x_T \in C$ such that $x_T < \frac{1 - \rho_B}{\rho_A - \rho_B}$, 
we can pick $x_T' \in J \setminus C$ with $x_T < x_T'$ so that by Proposition \ref{prop:nonincreasing}, $f_n(x_T) \geq f_n(x_T') \xrightarrow{n \to \infty} 1$.  
Then $f_n(x_T)\xrightarrow{n \to \infty} 1$ for every $x_T < \frac{1 - \rho_B}{\rho_A - \rho_B}$.
This shows that $\bigcup_{k}T^k(I_\infty)$ is full measure for any $x_T \in J$, 
$x_T < \frac{1 - \rho_B}{\rho_A - \rho_B}$. Hence, the limit set $\omega(x)$ has measure zero for every $x \in D$ under any $(\rho_A, \rho_B)$-map $T$ which is not surjective. 
\end{proof}

We would also like to show that the accumulation set of the straight-line flow in $C_2$ directions is a lamination whose cross section is a Cantor set. This is a consequence of the following proposition. 

\begin{proposition}
\label{prop:cantor}
Suppose that $T : D \rightarrow D$ is a non-surjective $(\rho_A, \rho_B)$-map such that the Rauzy induction algorithm goes through infinitely many left and right induction steps. Then, the limit set $\omega(x)$ of the orbit of any non-critical point $x \in D$ is a Cantor set. 
\end{proposition}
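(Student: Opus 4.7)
The strategy is to identify $\omega(x)$ with the closed measure-zero set
\[
K := D \setminus \bigcup_{k \geq 0} T^k(I_\infty),
\]
where $I_\infty = D \setminus T(D)$ is the wandering interval introduced before Proposition \ref{prop:nonincreasing}, and then to verify that $K$ is a Cantor set. First I would establish $\omega(x) \subseteq K$: since $I_\infty \cap T(D) = \emptyset$ and $T$ is injective, the forward orbit of any $x$ outside $\bigcup_k T^k(I_\infty)$ never enters a wandering interval, and for $x$ lying inside some $T^k(I_\infty)$ the orbit traverses the pairwise disjoint iterates $T^{k+n}(I_\infty)$, whose diameters must shrink to zero (total measure finite), so its accumulation points still fall in $K$. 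Proposition \ref{prop:measurezero} forces $K$ to have measure zero, and combined with $\omega(x) \subseteq K$ this immediately yields compactness, nowhere denseness, and total disconnectedness of $\omega(x)$; nonemptiness is automatic from compactness of $D$.

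The subtle remaining property is perfectness. Because the open intervals $T^k(I_\infty)$ are pairwise disjoint, they are the maximal gaps of $K$ precisely when no two of them share an endpoint; a shared endpoint would manifest as an isolated point of $K$. By tracking endpoints through the affine branches of $T$, such a shared endpoint produces a coincidence of the form $T^{m}(0) = T^{n}(1)$, equivalently (by injectivity of $T$ on its image) a critical orbit relation $T^p(0) = x_T$ or $T^q(1) = x_T$ through the break point. I would then argue that any such critical relation forces the Rauzy induction algorithm to either reach a termination step or to eventually perform only one type of induction, both of which are excluded by our hypothesis together with Proposition \ref{prop:C1}. This rules out isolated points and completes the proof that $\omega(x)$ is a Cantor set.

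To upgrade the inclusion $\omega(x) \subseteq K$ to the equality $\omega(x) = K$ that gives $x$-independence (as promised in the outline preceding this proposition), I would use that the infinite sequence of two-sided Rauzy inductions produces a nested family of first-return domains $D_0 \supsetneq D_1 \supsetneq \cdots$ whose diameters shrink to zero. The forward orbit of any non-critical $x$ must enter every $D_n$, and unfolding the partition of $D$ into $T$-iterates of each $D_n$ produces density of the orbit in $K$, giving $\omega(x) = K$. The main obstacle I anticipate is the perfectness step: translating the coincidence $T^m(0) = T^n(1)$ into a concrete obstruction for infinite two-sided Rauzy induction will require careful bookkeeping with the Rauzy length matrices $R_{m_A,m_B}$ and $L_{n_A,n_B}$ from Section \ref{sec:pApBInduction}, tracking how the orbits of the endpoints $0$ and $1$ interact with $x_T$ across many induction steps, and I expect this to take up the bulk of the proof.
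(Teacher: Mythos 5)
Your steps establishing $\omega(x)\subseteq K$, measure zero, compactness and total disconnectedness are fine and match the paper, but the two places you defer are genuine gaps, and one rests on a false claim. First, the nested Rauzy domains do \emph{not} have diameters shrinking to zero: each induction removes one top interval, and the domains decrease exactly to the closure of $I_\infty$, whose length $1-\rho_A x_T-\rho_B(1-x_T)$ is strictly positive precisely because $T$ is non-surjective. So your proposed route to $\omega(x)=K$ (``unfolding the partition of $D$ into $T$-iterates of each $D_n$'') has no shrinking partition to unfold; the tower bases stay of definite size. The paper's argument uses the opposite fact: since the domains shrink \emph{to} $\overline{I_\infty}$ and no orbit point after time one can lie in $I_\infty$ (it is disjoint from $T(D)$), every visit of the orbit to the $n$th domain is forced within a distance tending to $0$ of the two points $T(0),T(1)=\partial I_\infty$; pushing forward by $T$ and taking closures gives $\overline{\bigcup_k T^k(\partial I_\infty)}\subseteq\omega(x)$, and this closed set is identified with $K$ because $K$ is closed, measure zero, hence nowhere dense and equal to its boundary.

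Second, your perfectness step is where you diverge most from the paper, and the key implication is asserted, not proved: that a coincidence $T^p(0)=x_T$ or $T^q(1)=x_T$ forces the algorithm to terminate or to become eventually one-sided. Proposition \ref{prop:C1} gives the reverse direction (one-sided tails are degenerate) and does not yield this; the ``careful bookkeeping'' you postpone is exactly the content, so as written this is a gap rather than a proof. You would also have to rule out isolated points at the domain endpoints $0$ and $1$ (if some $T^j(I_\infty)$ ever straddles $x_T$, its image splits into pieces flush against $0$ and $1$, so you must first show this cannot happen under the hypothesis — in fact $x_T$ interior to a gap image forces the termination case at the corresponding induction step), and justify that the sets $T^k(I_\infty)$ are intervals at all. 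The paper sidesteps this entire combinatorial analysis: once $\omega(x)=\overline{\bigcup_k T^k(\partial I_\infty)}$ is established for \emph{every} non-critical $x$, applying it to $x=0$ and $x=1$ shows the set equals the set of limit points of the countable subset $\bigcup_k T^k(\partial I_\infty)$ it contains, hence is perfect, and perfect plus nowhere dense gives a Cantor set. So perfectness falls out of the $x$-independence you were treating as a separate, later step; I would restructure your argument in that order rather than attempt the coincidence-exclusion lemma.
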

\begin{proof}

Recall that, for any $x \in D$, $\omega(x)$ is disjoint from $\bigcup_{k}T^k(I_\infty)$.  
The complement of $\bigcup_{k}T^k(I_\infty)$ is closed and measure zero, so it is nowhere dense. 
Then it is equal to its boundary, which is $\overline{\bigcup_{k}T^k(\partial I_\infty)}$. 
Then we have $\omega(x) \subseteq \overline{\bigcup_{k}T^k(\partial I_\infty)}$.  
We need to establish the reverse inclusion.

Since we do infinitely many left and right inductions, we find that the dilation factors $\rho_A^{m_A}\rho_B^{m_B}$ and $\rho_A^{n_A}\rho_B^{n_B}$ both converge to zero (even if $\rho_A \geq 1$ recall that eventually both intervals contract, unless $x_T$ is all the way to the right in which case $T$ is surjective).
Hence the intervals $I_n$ on which we take the first return map of $T$ with each successive induction are descending intervals whose intersection is the closure of $I_\infty$.  

For some point $x \in D$, for each $k$, let $n_k$ be the first integer such that $T^{n_k}(x) \in I_k$.  
Then we find subsequences of images of $x$ accumulating to the boundary points of $I_\infty$, which are $T(1)$ and $T(0)$.  
Hence the closure of the images of these points $\overline{\bigcup_k T^k(\partial I_\infty)}$ sits in $\omega(x)$.

But this shows that $\omega(x) = \overline{\bigcup_k T^k(\partial I_\infty)}$ for all $x$.  
This is a perfect set because it is equal to $\omega(0)$ and $\omega(1)$, which are themselves the set of limit points of $\bigcup_k T^k(\partial I_\infty)$.
A perfect nowhere dense set in $D$ is necessarily totally disconnected and must be a Cantor set.  So $\omega(x)$ is a measure zero Cantor set which is independent of $x$.
\end{proof}

Finally, we are in position to prove Theorem \ref{thm:2} by combining the results of previous propositions.

\begin{proof}[Proof of Theorem \ref{thm:2}]

We let $X$ be a one-holed dilation torus and $C$ be the Cantor set of directions defined in Theorem \ref{thm:1}. Let $m_B$ be the boundary-parallel direction. In Section \ref{sec:boundaryParallelDirection}, we saw that the dynamics in this direction could be minimal, completely periodic, or accumulate on a periodic orbit.

We then split the rest of $C$ into the set $C_1$ corresponding to $(\rho_A, \rho_B)$-maps that have finitely many left induction steps or finitely many right induction steps, and $C_2$ the remaining directions. We note here that the set $C_1$ is countable. 

Proposition \ref{prop:C1} shows that in the $C_1$ directions, the corresponding $(\rho_A, \rho_B)$-maps accumulate onto a critical orbit. This implies that the straight line flow on $X$ in these directions accumulates onto a saddle connection. 

Propositions \ref{prop:measurezero} and \ref{prop:cantor} together show that in the $C_2$ directions, the corresponding $(\rho_A, \rho_B)$ maps accumulate onto measure zero Cantor sets. This implies that the straight line flow on $X$ in these directions accumulates onto a lamination whose cross section is a measure zero Cantor set. 
\end{proof}

\section{Concluding remarks}

In this paper, we investigated families of non-surjective $2$-AIETs that we called $\mathcal{I}(\rho_A, \rho_B)$. We showed that there is a Cantor set of parameters in each $\mathcal{I}(\rho_A, \rho_B)$ off of which the maps had an attracting periodic orbit. Then we showed that for maps in that Cantor set of parameters but off of a countable subset, orbits are attracted to some measure zero Cantor set. We saw that these results about $2$-AIETs implied similar results for one-holed dilation tori, a family of dilation surfaces.  

There are a few natural questions and further directions of exploration that come from this investigation: 
\begin{enumerate}
    \item Can we expand on the methods of Section \ref{sec:dynamics} to certain families of non-surjective AIETs on more than two intervals generically have attracting periodic orbits? It seems like the key hurdle here would be to prove an inequality like $|H(w)|/|I(w)| > \delta$ as in Proposition \ref{prop:12}. 
    \item It is possible to expand on the methods of Section \ref{sec:cantorMaps} to show that certain non-surjective AIETs on more than two intervals converge to measure zero Cantor sets? 
\end{enumerate}

If we could push the methods used in this paper to understand the limit sets of other families of AIETs, that could open the door to understanding the dynamics on other related families of dilation surfaces as well.  Already, one could use the results in this paper to better understand the dynamics on any dilation surface with a one-holed dilation torus as a subsurface.  By broadening the analysis to answer these questions, we may be able to classify dynamical behaviors on families of dilation surfaces containing slightly more complicated subsurfaces which might exhibit, say, first return maps which are non-surjective 3-AIETs.

\bibliographystyle{plain}
\bibliography{writeup_bib}

\end{document}